\theoremstyle{definition}
\newtheorem{defn}{Definition}[section]
\newmdtheoremenv{Definition}[defn]{Definition}
\newtheorem{remark}[defn]{Remark}
\newtheorem*{corol*}{Corollary}
\newtheorem*{remark*}{Bemerkung}
\newtheorem*{exmp*}{Example}
\newtheorem*{subalg*}{\textsf{Sub-Algorithmus}}
\theoremstyle{plain}
\newtheorem{thm}[defn]{Theorem}
\newmdtheoremenv{theo}[defn]{Theorem}
\newmdtheoremenv{Lemma}[defn]{Lemma}
\newmdtheoremenv{Korollar}[defn]{Corollary}
\newtheorem*{thm*}{Theorem}
\newtheorem{lemma}[defn]{Lemma}
\newtheorem{prop}[defn]{Proposition}
\newtheorem*{thmen*}{Theorem}
\newtheorem*{corolen*}{Corollary}
\numberwithin{equation}{section}
\newcommand{\R}{\ensuremath{\mathbb{R}}}
\newcommand{\N}{\ensuremath{\mathbb{N}}}
\newcommand{\E}{\ensuremath{\mathbb{E}}}
\newcommand{\ind}{\ensuremath{\mathbbm{1}}}
\newcommand{\defeq}{\ensuremath{\vcentcolon}=}
\newcommand{\eps}{\epsilon}
\newcommand{\PP}{\ensuremath{\mathbb{P}}}
\newcommand{\ddd}{\ensuremath{\text{d}}}
\newcommand{\vertiii}[1]{\vert\kern-0.25ex\vert\kern-0.25ex\vert #1 
    \vert\kern-0.25ex\vert\kern-0.25ex\vert}
\title{
How the interplay of dormancy and selection affects the wave of advance of an advantageous gene}
\author{Jochen Blath, Matthias Hammer, Dave Jacobi, Florian Nie}
\date{\today}
\begin{document}

\maketitle

\begin{abstract}
In this paper we investigate the spread of advantageous genes in two variants of the F-KPP model with dormancy. The first variant, in which dormant individuals do not move in space and instead form `localized seed banks', has recently been introduced in Blath, Hammer and Nie (2020). However, there, only a relatively crude upper bound for the critical speed of potential travelling wave solutions has been provided. The second model variant is new and describes a situation in which the dormant forms of individuals are subject to motion, while the `active' individuals remain spatially static instead. This can be motivated e.g.\ by spore dispersal of fungi, where the `dormant' spores are distributed by wind, water or insects, while the `active' fungi are locally fixed.
For both models, we establish the existence of monotone travelling wave solutions, determine the corresponding critical wave speed 
in terms of the model parameters, and characterize aspects of the asymptotic shape of the waves depending on the decay properties of the initial condition.

Interestingly, the slow-down effect of dormancy on the speed of propagation of beneficial alleles is often more serious in model variant II (the `spore model') than in variant I (the `seed bank model'), and this can be understood mathematically via probabilistic representations of solutions in terms of (two variants of) `on/off branching Brownian motions'. 
Our proofs make rather heavy use of probabilistic tools in the tradition of Watanabe (1967), McKean (1975), Bramson (1978), Neveu (1987), Lalley and Sellke (1987),  Champneys et al (1995) and others. 
However, the two-compartment nature of the model and the special forms of dormancy also pose obstacles to the classical formalism, giving rise to a variety of open research questions that we briefly discuss at the end of the paper.
\end{abstract}

\medskip

\medskip

\noindent {\em Keywords and phrases:} Fisher-Kolmogorov-Petrovski-Piscounov equation, travelling wave, dormancy, seed bank, 
on/off branching Brownian motion, advantageous gene.

\medskip

\noindent {\em MSC 2020 Subject classification:} 92D25, 60H30, 35K57.

\section{Introduction and main results}
\label{sec:intro}

\subsection{Background: The F-KPP equation with dormancy}

The F-KPP equation (named after Fisher \cite{F37} and Kolmogorov, Petrovski and Piscounov \cite{KPP37}) is  the simplest and  most prominent example of a non-linear reaction-diffusion system. In population genetics, it is used to  describe the  propagation of an `advantageous gene' or  `beneficial allele' in a bi-allelic population under the influence of directional selection. Denoting by $p(t,x)\in [0,1]$ its solution at time $t\geq 0$ and spatial position $x \in \R$, which we here interpret as the fraction of a beneficial allele present in a biological population at location $x$ and time $t$, the corresponding initial value problem is given by the non-linear second order partial differential equation
\begin{equation}
	\label{eq:F-KPP_original}
	\partial_t p(t,x) = \frac{\Delta}{2} p(t,x) + p(t,x)(1-p(t,x)), 
\end{equation}
together with a suitable initial condition 
$p_0 \in \mathcal{B}(\mathbb{R}, [0,1])$, i.e.\ a bounded Borel-measurable function defined on $\R$ taking values in $[0,1]$. It is well-known that for each
$$
\lambda  \ge  \lambda^{*, {\rm classical}} := \sqrt{2} ,
$$
this systems has a monotone travelling wave solution of speed $\lambda$, i.e.\ the solution takes the form
$$
p(t,x)=w(x-\lambda t),
$$
where $w$ is a decreasing function with 
$$
\lim_{x \to -\infty} w(x)=1 \quad \mbox{ and } \quad \lim_{x \to \infty} w(x)=0,
$$
and this solution is unique up to translations. While wave speeds above the `critical value' can be obtained by starting from initial conditions with suitable decay behaviour,  the critical wave speed $\sqrt{2}$ is realized in particular when starting in a Heaviside initial condition. These (and much finer results) can be found in the classical works \cite{McK75}, \cite{B78}, \cite{B83}, \cite{LS87}, and the model has been extended in many directions, including coupled systems with multiple components, see e.g. \cite{F91}, \cite{C97}.

\medskip

Recently, the F-KPP equation has been extended to incorporate the biological concepts of {\em dormancy} and {\em seed banks}. Here, in addition to undergoing spatial dispersal and selective pressure, individuals may independently switch into a state of low metabolic activity. In this dormant state, individuals neither move nor reproduce. The dormant individuals of a population thus form a `seed bank' which buffers genetic diversity. In biology, such a type of dormancy is often regarded as a bet-hedging strategy against unfavourable environmental conditions. The corresponding trait is widespread among many taxa, including many microbial species, see e.g. \cite{LJ11} and \cite{LHWB21+} for overviews.

For spatial systems such as the F-KPP equation, a way to incorporate a seed bank comprised of dormant individuals is to introduce a second component to the system, which describes, for each spatial position, the relative frequency of the beneficial allele in the seed bank. Active and dormant components then interact via local two-way migration (`switching'). This idea has been formalized in \cite{BHN19}, leading to the {\em coupled} two-type system
\begin{align} 
	\partial_t p(t,x) = & \frac{\Delta}{2} p(t,x)   + c (q(t,x)-p(t,x)) +s p(t,x)(1-p(t,x)), \notag\\
	\partial_t q(t,x) = & c'(p(t,x) -q(t,x)),
	\label{eq:F-KPP:with_seed_bank_original}
\end{align}
starting from a pair of initial type configurations $p_0, q_0\in \mathcal{B}(\mathbb{R}, [0,1])$ with parameters $c, c', s>0$. Here, the $p$ population represents the fraction of beneficial alleles in the active population, and $q$ describes the corresponding quantity in the dormant population. The switching rates into and out of dormancy are given by $c$ and $c'$, and $s$ describes the strength of selection favoring the beneficial allele. Note that the second component neither features a Laplace operator (a result of the fact that dormant individuals do not move) nor the selective component (dormant individuals do not feel selective pressure or competition).

\medskip
A `common source of confusion' in the context of the F-KPP equation is the use of two different conventions regarding the sign in front of the non-linearity of the equation. Historically, Fisher was interested in the advance of the {\em advantageous} gene which lead to the introduction of Equation \eqref{eq:F-KPP_original} with a positive sign (directional selection increases the frequency of the advantageous type). However, in order to exploit the fruitful probabilistic {\em method of duality} to analyze the system, it is useful  
to switch focus from the advantageous to the deleterious gene whose frequency is given by
$$u(t,x)\defeq 1-p(t,x).$$
This transformation leads to a change of sign in front of the selection term, with $u$ now solving the equation
\begin{equation}
	\label{eq:F-KPP}
	\partial_t u(t,x) = \frac{\Delta}{2} u(t,x) - u(t,x)(1-u(t,x)),
\end{equation}
which is of course equivalent to the original system.
However, note that the direction of travelling wave solutions will now be reversed (and the initial conditions need to be transformed as well).
We will follow this convention, and 
in what follows focus on the `transformed' F-KPP equation with dormancy (and selection strength $s>0$) given by
\begin{align}
	\partial_t u(t,x) = & \frac{\Delta}{2} u(t,x)   + c (v(t,x)-u(t,x)) -s u(t,x)(1-u(t,x)), \notag\\
	\partial_t v(t,x) = & c'(u(t,x) -v(t,x))
	\label{eq:F-KPP:with_seed_bank}
\end{align}
for the deleterious allele (and with corresponding initial conditions). This is in line with notation used in a majority of the probabilistic literature concerning the classical F-KPP equation (see e.g.\ \cite{H99}, \cite{McK75} or \cite{B83}).

\medskip
As mentioned above, a powerful approach to analyze solutions to the above system is via its dual Markov process (see e.g.\ \cite{S88} and \cite{AT00} for the method of duality in spatial population models). Indeed, the dual process of the (transformed) F-KPP equation \eqref{eq:F-KPP} is given by {\em branching Brownian motion} (BBM); this link was pointed out by \cite{McK75} and heavily exploited in \cite{B78}, but is already present in the more general framework of \cite{S64} and \cite{INW68a,INW68b, INW69}. Branching Brownian motion and its generalizations have been a central object of study in modern probability theory and statistical physics for several decades, see e.g.\ \cite{B15} for an overview.

\medskip

For the coupled two-type system including dormancy \eqref{eq:F-KPP:with_seed_bank},
the dual process (introduced in \cite{BHN19}) is given by so-called {\em on/off branching Brownian motion} (on/off BBM). 
This dual is again a branching Markov process (in the sense of \cite{INW68a,INW68b, INW69}), denoted by $M=(M_t)_{t \geq 0}$, formally taking values in the space 
\begin{equation}
\label{eq:statespace}
\Gamma :=\bigcup_{k \in \N_0} 
\left(\R \times \lbrace \boldsymbol{a},\boldsymbol{d}\rbrace \right)^k.
\end{equation}
Note that for fixed $k$, we interpret elements of $\left(\R \times \lbrace \boldsymbol{a},\boldsymbol{d}\rbrace \right)^k$ 
 as the spatial positions of $k$ particles in $\R$, each carrying a flag from $\lbrace \boldsymbol{a},\boldsymbol{d}\rbrace$. Particles with flag $\boldsymbol{a}$ are deemed active, while particles flagged with a $\boldsymbol{d}$ are deemed dormant.
Starting from some initial value 
$$
M_0= \big((x_1,\sigma_1), \ldots, (x_n,\sigma_n )\big)\in\left(\R \times \lbrace \boldsymbol{a},\boldsymbol{d}\rbrace \right)^n$$ 
for some $n \in \N$, the process evolves according to the following rules:
	\begin{itemize}
		\item Active particles, i.e.\ particles carrying flag $\boldsymbol{a}$, disperse in $\R$ according to independent Brownian motions
		and branch into two active particles at rate $s$. \vspace{-2mm}
		\item Independently, each active particle falls dormant at rate $c$, changing its flag from $\boldsymbol{a}$ to $\boldsymbol{d}$.\vspace{-2mm}
		\item Dormant particles neither move nor branch.\vspace{-2mm} 
		\item Independently, each dormant particle resuscitates at rate $c'$, changing its flag from $\boldsymbol{d}$ to $\boldsymbol{a}$.
	\end{itemize}
    To keep track of the number of active and dormant individuals (at each time $t\ge 0$), we denote by 
    $I_t$ and $J_t$ the (time-dependent) index sets of active and dormant particles of $M_t$, respectively. Further, we set $K_t := I_t \cup J_t$ and let $N_t:=\vert K_t \vert $ be the total number of particles at time $t\geq 0$. 
    For example, if for $t\geq 0$ we have 
    $$
    M_t=\big((M^1_t,\boldsymbol{a}), (M^2_t,\boldsymbol{d}),(M^3_t,\boldsymbol{a}),  (M^4_t, \boldsymbol{a})\big) \in \left(\R \times \lbrace \boldsymbol{a},\boldsymbol{d}\rbrace \right)^4,
    $$
then 
$$
I_t=\lbrace1,3,4 \rbrace,\, \quad  J_t=\lbrace 2 \rbrace,\, \quad \mbox{ and } \quad N_t=4.
$$
Note that this brief description can be expanded and formalized using e.g.\ the {\em Ulam-Harris labelling} (see e.g. 
\cite{N88}), but for brevity we refrain from going into the details here. 

With the help of the dual process just described, a probabilistic representation of the solution to \eqref{eq:F-KPP:with_seed_bank} can be given as follows (see \cite[Cor.\ 1.8]{BHN19}).
Indeed, starting in a Heaviside initial condition given by $$u_0(x):= v_0(x):={\bf 1}_{[0,\infty [},$$ we have the (analog of the classical) {\em McKean representation} for the solution $u$ given by 
\begin{align}\label{eq:McKean}
	u(t,x)=\PP_{(0, \boldsymbol{a})}(R_t \leq x),
	 \quad t \ge 0, \,x \in \R,
\end{align}
where $(R_t)_ {t\geq 0}$ is the position of the rightmost (maximal) particle of an on/off branching Brownian motion as defined above, started with a single active particle in $0$, i.e.\ $M_0=(0, \boldsymbol{a})$.

A question that can be answered for the classical F-KPP equation via the McKean representation concerns the speed of propagation of advantageous genes (e.g.\ when starting with reversed Heaviside initial conditions). Indeed, the position $R_t$ of the rightmost particle of the dual branching Brownian motion at time $t$  relates to the critical wave speed of the original equation via the a.s.\ equality
\begin{align} 
\label{eq: AsymptoticWaveSpeedF-KPP}
	\lim_{t \to \infty} \frac{R_t}{t}= \sqrt{2}= \lambda^{*, {\rm classical}}.
\end{align}

\medskip 

In line with intuition from biology (see e.g.\ \cite{LHWB21+}), namely that seed banks should contribute to the diversity and resilience of populations, one expects that the presence of dormancy in \eqref{eq:F-KPP:with_seed_bank} should at least slow down the speed of travelling wave solutions, if not preventing their emergence entirely. Following this line of thought, a first relatively crude argument 
in \cite{BHN19} shows that the speed of the rightmost particle is indeed reduced significantly due to the effect of dormancy. More precisely, via a simple first moment bound in combination with a many-to-one lemma, the upper bound
$$
\lim_{t \to \infty} \frac{R_t}{t} \le \sqrt{\sqrt{5}-1}\approx 1{.}11 < \sqrt{2}
$$
has been obtained (for the case $c=c'=s=1$). However, it was left open in \cite{BHN19} whether this bound is actually sharp, and even whether non-trivial travelling wave solutions exist at all.

\medskip

In the present paper, our first aim is to extend this result by providing the existence of travelling wave solutions, the exact value of the critical wave speed of the F-KPP equation with dormancy and the asymptotic speed (up to first order) of the rightmost particle of the on/off branching Brownian motion. 
In the case $c=c'=s=1$, we will see that the previous upper bound is {\em not} sharp, and that the correct value is given by 
$$
\lambda^* \approx 0{.}98 < \sqrt{\sqrt{5}-1} <  \sqrt{2}.
$$
Our second aim in this paper is to  expand our modelling approach. Indeed, there is another natural way to incorporate seed banks into an F-KPP based model, that was not considered in \cite{BHN19}, and for which we aim to obtain similar results while also investigating their quantitative differences.

\subsection{Two models for the interplay of dormancy and dispersal}

Understanding the role of different forms of dormancy in population genetics and ecology is currently an active field of research, 
and this  holds in particular for the interplay between dormancy and spatial dispersal (\cite{WLL19}, \cite{BC14}, \cite{GHO20}). While it is usually assumed that there is a trade-off between dormancy and dispersal (dormancy preventing dispersal), there are also situations where dormancy actually facilitates dispersal. Natural examples include fungi, whose dormant life-stages are spores which are often dispersed by wind. Their robust dormant form allows them to potentially travel far distances. Other mechanisms of dispersal of spores include  transmission via insects, or by water. Common to these examples is that the `active' reproducing form (fungus) remains locally static, while the dormant form (spore) disperses. To capture such a scenario in an F-KPP based system, it seems natural to move the diffusion operator from the first (active) component to the second (dormant) component.  In what follows, we thus present two variants of our F-KPP model with dormancy, describing either dispersal of actives only (`seed bank model'), or dispersal of dormants only (`spore model'). While the first variant is merely a  small extension of the system \eqref{eq:F-KPP:with_seed_bank} to more general selection terms, the second variant appears to be new.

\begin{defn}[F-KPP equation with dormancy, variant I: seed bank model]
The initial value problem associated with the {\em F-KPP equation with dormancy, variant I}, is given by the coupled system
\begin{align} \label{eq:FKPPDormancy}
    \partial_t u(t,x) &= \frac{\Delta}{2} u(t,x)+c (v(t,x)-u(t,x)) +\kappa s(u(t,x)) u(t,x)(u(t,x)-1),\nonumber\\
    \partial_t v(t,x) &= c' (u(t,x)-v(t,x)),\qquad t>0,\; x\in\R
\end{align}
with initial conditions $u_0, v_0\in \mathcal{B}(\mathbb{R}, [0,1])$ and parameters $c, c',\kappa > 0$,
where the {\em selection term} is of the form
\begin{align*}
    s(u)u(u-1)=  \sum_{k=1}^\infty p_k (u^{k+1}-u), \quad u \in [0,1],
\end{align*}
for a given probability distribution $p=(p_k)_{k \in \N}\subseteq [0,1]^\N$ such that $\sum_{k=1}^\infty p_k=1 $ and $\sum_{k=1}^\infty p_k k < \infty$.
\end{defn}

Note that the form of the selection term ensures the duality to  {\em on/off branching Brownian motion} with general (not necessarily binary) branching mechanism. 
In fact, this branching mechanism is precisely given by the probability distribution $(p_k)_{k\in\N}$, i.e., the probability to see $k+1$ offspring in a branching event (which happen with overall rate $\kappa$) is precisely $p_k$. 
Also note that Equation  \eqref{eq:F-KPP:with_seed_bank} is the special case of \eqref{eq:FKPPDormancy} with $p_1=1$ and $\kappa s(u)\equiv s$.

The second variant is distinguished from the previous one by moving the Laplacian from the first to the second component:

\begin{defn}[F-KPP equation with dormancy, variant II: spore model]
The initial value problem associated with the {\em F-KPP equation with dormancy, variant II} is given by the coupled system 
\begin{align} \label{eq:FKPPDormancyII}
    \partial_t  \tilde  u(t,x) &= \tilde c ( \tilde  v(t,x)- \tilde  u(t,x)) + \kappa \tilde s( \tilde  u(t,x))  \tilde  u(t,x)( \tilde  u(t,x)-1), \nonumber\\
    \partial_t  \tilde  v(t,x) &= \frac{\Delta}{2}  \tilde  v(t,x)+ \tilde c' ( \tilde  u(t,x)- \tilde  v(t,x)), \qquad t>0,\; x\in\R
\end{align}
with initial conditions $\tilde u_0, \tilde v_0\in \mathcal{B}(\mathbb{R}, [0,1])$ and parameters $\tilde c, \tilde c', \kappa > 0$, 
where the {\em selection term} is of the form
\begin{align*}
    \tilde s( \tilde  u) \tilde  u( \tilde  u-1)= \sum_{k=1}^\infty \tilde  p_k ( \tilde  u^{k+1}- \tilde  u), \quad  \tilde  u \in [0,1],
\end{align*}
for a given probability distribution $\tilde p=(\tilde p_k)_{k \in \N}\subseteq [0,1]^\N$ such that $\sum_{k=1}^\infty \tilde  p_k=1 $ and $\sum_{k=1}^\infty \tilde  p_k k < \infty.$
\end{defn}

\begin{remark}[Alternative interpretation of model II]
    Note that in our interpretation, the selective pressure is always applied to the active population (e.g.\ corresponding to fertility selection). An equivalent interpretation of model variant II can be obtained by interchanging the roles of $\tilde u$ and $\tilde v$: Now, the interpretation is that active individuals disperse, but are not subject to selective pressure, which acts only on dormant individuals.
\end{remark}

\begin{remark}[Relation to existing theory for coupled reaction-diffusion systems]
Note that both model variants I and II can be considered as coupled reaction-diffusion systems. Coupled systems have been considered e.g.\ by Freidlin \cite{F91},  Champneys et.\ al.\ \cite{C97} and by Bovier and Hartung \cite{BH23}. However, there the assumption is that the Laplace operator is present in both sub-populations, which is not the case in our set-up. Still, several of the arguments of Champneys et.\ al. can be employed for the analysis of our system, as we will see later in the paper. 
\end{remark}

\begin{remark}[Delay representations]
    Both model variants allow a reformulation as delay-equations. While the delay in the first model is essentially the same as in the model considered in \cite{BHN19}, simply including a more general selection term, the delay formulation of the second variant can in general only be provided implicitly. The reason is that if the Laplace operator is moved to the second component, it is the first component that can be represented as solution of an ODE. However, since the selection term is of second order (or higher), one  ends up with a Riccati-type equation, which in general cannot be solved explicitly. Since the delay-representation is not required for our results (in contrast to the situation in \cite{BHN19}), we refrain from going into the details here. 
\end{remark}

\medskip

Both of the above model variants again have dual spatial branching processes. In order to provide the corresponding dualities, we first formally introduce the  two  variants of on/off branching Brownian motions (on/off BBM, variant I and II) arising as duals to \eqref{eq:FKPPDormancy} and \eqref{eq:FKPPDormancyII}. We employ the notation from the preceding section.

\begin{defn}[on/off-BBM, variant I] \label{def:VariantI}
    On/off branching Brownian motion variant I  corresponds to the system \eqref{eq:FKPPDormancy}  and is the unique Markov process $(M_t)_{t \ge 0}$ with state space $\Gamma$ from \eqref{eq:statespace} evolving according to the following rules:
	\begin{itemize}
		\item Active particles (carrying flag $\boldsymbol{a}$) disperse in $\R$ according to independent Brownian motions. \vspace{-2mm}
		\item Active particles branch at rate $\kappa$ into $k+1 \in \N$ offspring particles according to the distribution $p=(p_k)_{k\in \N}.$ \vspace{-2mm}
		\item Independently, each active particle falls dormant at rate $c$, changing its flag from $\boldsymbol{a}$ to $\boldsymbol{d}$.\vspace{-2mm}
		\item Dormant particles (carrying flag $\boldsymbol{d}$) neither move nor branch.\vspace{-2mm} 
		\item Independently, each dormant particle resuscitates at rate $c'$, changing its flag from $\boldsymbol{d}$ to $\boldsymbol{a}$.
	\end{itemize}
\end{defn}
	
\begin{defn}[on/off-BBM, variant II]\label{def:VariantII}
    On/off branching Brownian motion variant II corresponds to the system \eqref{eq:FKPPDormancyII} and is the unique Markov process $(\tilde M_t)_{t \ge 0}$ with state space $\Gamma$ from \eqref{eq:statespace}  evolving according to the following rules:
	\begin{itemize}
		\item Active particles (carrying flag $\boldsymbol{a}$) do \textit{not} move  but  branch at rate $\kappa$ into $k+1 \in \N$ offspring particles according to the distribution $\tilde p=(\tilde p_k)_{k\in \N}$.\vspace{-2mm} 
		\item Independently, each active particle falls dormant at rate $\tilde c$, changing its flag from $\boldsymbol{a}$ to $\boldsymbol{d}$.\vspace{-2mm}
		\item Dormant particles (carrying flag $\boldsymbol{d}$) disperse in $\R$ according to independent Brownian motions. \vspace{-2mm}
		\item Dormant particles do \textit{not} reproduce. \vspace{-2mm}		
		\item Independently, each dormant particle resuscitates at rate $\tilde c'$, changing its flag from $\boldsymbol{d}$ to $\boldsymbol{a}$.
	\end{itemize}
\end{defn}

\noindent
As before, we denote by $I_t$ resp.\ $J_t$ the index sets of active resp.\ dormant particles at time $t\ge0$.
\medskip

With this notation, we are now in a position to provide a formal statement of the duality between on/off BBM and the F-KPP equation with dormancy 
in each of the variants I and II.

\begin{prop}\label{prop:duality}
Consider Equation \eqref{eq:FKPPDormancy} resp.\ \eqref{eq:FKPPDormancyII} with initial conditions $u_0,v_0 \in \mathcal{B}(\mathbb{R}, [0,1])$ resp.\ $\tilde u_0, \tilde v_0 \in \mathcal{B}(\mathbb{R}, [0,1])$. Moreover, let $(M_t)_{t\geq 0}$ resp.\ $(\tilde M_t)_{t \geq 0}$ be on/off BBMs of variant I resp.\ variant II. Then, Equation \eqref{eq:FKPPDormancy} has a unique solution taking values in $[0,1]$ which is given by 
\begin{align*}
    u(t,x) &=\E_{(x,\boldsymbol{a})}\left[\prod _{\alpha \in I_t}u_0(M^\alpha_t)\prod _{\beta \in J_t}v_0(M^\beta_t)\right], 
	& v(t,x) =\E_{(x,\boldsymbol{d})}\left[\prod _{\alpha \in I_t}u_0(M^\alpha_t)\prod _{\beta \in J_t}v_0(M^\beta_t)\right], 
\end{align*}
and the analogous statement holds for Equation \eqref{eq:FKPPDormancyII} with $u, v, M$ replaced by $\tilde u, \tilde v, \tilde M$. 
\end{prop}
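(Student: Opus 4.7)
The plan is to apply the standard generator-based duality between branching Markov processes and semi-linear parabolic systems, as developed in \cite{INW68a, INW68b, INW69, AT00} and carried out in the binary-branching seed-bank setting in \cite{BHN19}. I would define candidates
\begin{align*}
    \hat u(t,x) &\defeq \E_{(x,\boldsymbol{a})}\Bigl[\prod_{\alpha \in I_t} u_0(M^\alpha_t) \prod_{\beta \in J_t} v_0(M^\beta_t)\Bigr], \\
    \hat v(t,x) &\defeq \E_{(x,\boldsymbol{d})}\Bigl[\prod_{\alpha \in I_t} u_0(M^\alpha_t) \prod_{\beta \in J_t} v_0(M^\beta_t)\Bigr],
\end{align*}
verify that $(\hat u, \hat v)$ solves \eqref{eq:FKPPDormancy}, and conclude by uniqueness of bounded $[0,1]$-valued solutions (a standard Gronwall argument applied to the mild formulation, since $s(u)u(u-1) = \sum_k p_k(u^{k+1}-u)$ is locally Lipschitz on $[0,1]$).

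The verification itself would proceed by a first-event decomposition. Starting from a single active particle at $x$, let $\tau \sim \Exp(c+\kappa)$ be the time of its first transition. On $\{\tau > t\}$ the particle has only performed Brownian motion, contributing $e^{-(c+\kappa)t}(P_t u_0)(x)$ with $P_s$ the heat semigroup on $\R$. On $\{\tau \le t\}$, the strong Markov and branching properties factorize the multiplicative functional into $c\, \hat v(t-\tau, B_\tau)$ for a falling-asleep event, and $\kappa \sum_{k\geq 1} p_k\, \hat u(t-\tau, B_\tau)^{k+1}$ for a branching event into $k+1$ independent active subtrees. Assembling these gives the Duhamel identity
\begin{align*}
    \hat u(t,x) = e^{-(c+\kappa)t}(P_t u_0)(x) + \int_0^t e^{-(c+\kappa)r}\, P_r\!\left[c\,\hat v(t-r,\cdot) + \kappa \sum_{k=1}^\infty p_k\, \hat u(t-r,\cdot)^{k+1}\right]\!(x)\, dr,
\end{align*}
together with an analogous but semigroup-free identity for $\hat v$ involving only an exponential holding time at rate $c'$. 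Differentiating in $t$ and using $\kappa \sum_{k\ge 1} p_k(u^{k+1}-u) = \kappa\, s(u)\, u(u-1)$ recovers \eqref{eq:FKPPDormancy}.

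For variant II the argument is structurally identical, with the roles of diffusion and jump exchanged: active particles evolve through pure jumps only, while dormant particles perform Brownian motion punctuated by resuscitation. The first-event decomposition now produces a purely non-local integral equation for $\tilde u$ and a Duhamel identity for $\tilde v$ built from the heat semigroup, and on differentiation these yield \eqref{eq:FKPPDormancyII}. The main technical points I anticipate are non-explosion of the o/oBBM and the interchange of expectation with the infinite branching sum. Both are controlled by the finite-mean hypothesis $\sum_k k p_k < \infty$: a many-to-one computation bounds the expected particle number by $e^{\kappa (\sum_k k p_k)\, t}$, giving non-explosion, and inside the Duhamel identity the summand $p_k\, \hat u^{k+1}$ is pointwise bounded by $p_k$, so dominated convergence applies without further work. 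These are the only places where the present setting genuinely differs from the binary-branching case already handled in \cite{BHN19}.
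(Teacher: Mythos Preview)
Your proposal is correct and is precisely the approach the paper has in mind: the paper omits the proof, stating only that it is ``a simple extension of the corresponding result in \cite{BHN19}'' with ``similar arguments'' for variant II, and your first-event/Duhamel decomposition together with uniqueness is exactly that extension. You have also correctly isolated the two points where the general offspring distribution requires extra care beyond the binary case in \cite{BHN19}, and handled them appropriately via the finite-mean hypothesis.
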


For variant I, Proposition \ref{prop:duality} is a small extension (to more general selection terms resp.\ branching mechanisms) of the corresponding result in \cite{BHN19}. We remark however that actually for both models, the probabilistic representation of the corresponding solutions 
is already contained in the very general framework of \cite{INW69}, since both variants of on/off BBM are branching Markov processes in the sense of that paper. We will give a brief overview of how the two models fit into this framework in Section \ref{sec:add_mart-1}.

\subsection{The linearized systems and their `wave speed functions'}
\label{ssn:travel_waves}

We aim to establish, for both model variants I and II, critical wave speeds $\lambda^*, \tilde \lambda^* \in\, ]0, \infty[$ 
and the existence of monotone travelling wave solutions 
\begin{align*}
   u(t,x) =f(x-\lambda t), 
   \qquad
   v(t,x) =g(x-\lambda t)
\end{align*}
to Equation \eqref{eq:FKPPDormancy} for all speeds $\lambda > \lambda^*$, 
resp.\ to Equation \eqref{eq:FKPPDormancyII} for all speeds $\tilde \lambda > \tilde \lambda^*$. 
For variant I, this is equivalent to the pair $(f,g)$ solving the system 
\begin{align} \label{eq:TravellingWaveI}
    0=\frac{1}{2} f''+\lambda f' +c(g-f)+ \kappa s(f) f(f-1), \quad  \quad\quad  0=\lambda g'+c'( f- g),
\end{align}
which we call the \textit{travelling wave equation} for variant I (with a similar system for variant II). 
Solutions to this equation will also be called \emph{travelling waves of speed $\lambda$.}

\medskip

Following classical ideas (cf.\ \cite[p.\ 83]{C97}), 
	we first consider the linearized versions of the corresponding F-KPP equations with dormancy, which for  variant I  is given by 
\begin{align} \label{eq:LinearFKPPDormancy}
    \partial_t u(t,x) &= \frac{\Delta}{2} u(t,x)+c (v(t,x)-u(t,x)) + {\tt s} u(t,x),\nonumber\\
    \partial_t v(t,x) &= c' (u(t,x)-v(t,x))
\end{align}
with 
\begin{equation}\label{eq:def-s}
 {\tt s}:= \kappa \sum_{k=1}^\infty p_k k
 \end{equation}
(the corresponding equation for variant II is again omitted for brevity). Note that all information on the branching mechanism encoded in the selection term here is condensed into the real number ${\tt s}$, which is given by the overall branching rate $\kappa$ times the expected number of offspring (excluding the parent). 
Equivalently, the corresponding linearized travelling wave 
equation is now given by the system
\begin{align} \label{eq:LinearTravellingWAVE}
    0=\frac{1}{2} {\tt f}''+\lambda {\tt f}' +c({\tt g}-{\tt f})+ {\tt s} \tt f, \quad  \quad\quad  0=\lambda {\tt g'} +c'(\tt f-\tt g).
\end{align}
It will be suitable to interpret this linear system (as in \cite{C97})  
as a vector-valued equation given by 
\begin{align}
    0=\frac{1}{2}A \vec{w}'' + \lambda \vec{w}' + Q \vec{w} + R\vec{w},
\end{align}
where 
\begin{align*}
    A=\begin{pmatrix}
1 & 0\\
0&0
\end{pmatrix},\quad 
Q=\begin{pmatrix}
-c & c\\
c'&-c'
\end{pmatrix},\quad 
R=\begin{pmatrix}
{\tt s} & 0\\
0&0
\end{pmatrix}
\quad \mbox{ and }   \quad   \vec{w}(x)=\begin{pmatrix}
{\tt f}(x)\\
{\tt g}(x)
\end{pmatrix}.
\end{align*}

\noindent The usual Ansatz to solve this system is to choose $(\tt f,\tt g)$ in dependence on a decay parameter 
$\mu < 0$ 
of the form 
\begin{align}
\label{eq:f,g-specific}
    {\tt f}_\mu (x) = d_1(\mu) e^{\mu x} \quad \mbox{ and } \quad
   {\tt g}_\mu (x) = d_2(\mu) e^{\mu x},\qquad x\in\R.
\end{align}
Writing $\vec{d}(\mu):=(d_1(\mu),d_2(\mu))^T$, this leads to the eigenvalue problem
\begin{align}
\label{eq:eigenvalue_problem}
    \left(\frac{1}{2}\mu^2 A +Q +R \right) \vec{d}(\mu)=-\mu \lambda \vec{d}(\mu).
\end{align}
For each given $\mu$, solving this for $\lambda$ gives two possible values, namely
\begin{align*}
 \lambda^-_\mu = - \frac{1}{2 \mu} \Big(  {\tt s}-c'-c - \sqrt{c^2+2\,c\,c'-c\,\mu^2-2\,c\, {\tt s}+ ({c'})^2+c'\,\mu^2+2\,c'\, {\tt s}+\frac{\mu^4}{4}+\mu^2\, {\tt s}+{\tt s}^2} + \frac{\mu^2}{2}\Big)  \end{align*}
 and
 \begin{align}\label{eq:lambda_mu}
 \lambda^+_\mu
 = - \frac{1}{2 \mu} \Big( {\tt s}-c'-c + \sqrt{c^2+2\,c\,c'-c\,\mu^2-2\,c\,{\tt s}+({c'})^2+c'\,\mu^2+2\,c'\,{\tt s}+\frac{\mu^4}{4}+\mu^2\,{\tt s}+{\tt s}^2} + \frac{\mu^2}{2}\Big) .
 \end{align}

From Perron-Frobenius-Seneta theory \cite{S1981} it is not hard to see that for each $\mu < 0$, the eigenvalue $-\mu \lambda^+_\mu$ for \eqref{eq:eigenvalue_problem} is strictly positive, and consequently that $\mu\mapsto\lambda^+_\mu$ is a positive function. Moreover, 
the corresponding eigenvector $\vec{d}(\mu)$ (which is unique up to constant multiples) can be computed explicitly, and in particular can be chosen with strictly positive entries. We refer to Lemma \ref{lem:speedfunctionpositivity} 
in the Appendix for details. 
Since these properties will be essential ingredients in the proofs below, we will now mainly
 focus on $\lambda^+_\mu$ and omit the superscript ${}^+$ in what follows, i.e.\ we simply write $\lambda_\mu$ for the value in \eqref{eq:lambda_mu}.
Further, the notation $\vec{d}(\mu)$ will henceforth always denote 
the unique eigenvector in \eqref{eq:eigenvalue_problem} satisfying 
\begin{equation}\label{eq:eigenvector}
1=d_1(\mu)>d_2(\mu)>0,
\end{equation}
 see Lemma \ref{lem:speedfunctionpositivity}.

\begin{prop}[Speed function for travelling wave solutions, model variant I]
\label{prop:speed_function}
The differentiable map 
$$
\lambda_\bullet:\,  ]-\infty, 0[ \, \to \, ]0, \infty [ \, , \quad \mu \mapsto \lambda_\mu,
$$
called the {\em speed function} of the linearized travelling wave equation for model variant I, 
has a unique local and global minimum 
on the negative half axis. The minimizer $\mu^*\in\,]0,\infty[$ is called {\em critical decay rate, and the corresponding minimal value} 
$$
\lambda^*:=\lambda_{\mu^*}>0
$$
is called {\em critical wave speed}. 
\end{prop}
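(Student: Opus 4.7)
My plan is to reparametrize by $\tau \defeq \mu^2/2 > 0$ and to recognize the bracketed expression in the definition of $\lambda^+_\mu$ as the Perron eigenvalue $\chi(\tau)$ of the essentially nonnegative matrix $\tau A + Q + R$. Regrouping the terms under the square root as $(s-c-c'+\tau)^2 + 4c'(s+\tau)$, one obtains the explicit form
\[
\chi(\tau) = \tfrac{1}{2}\Bigl[(s-c-c'+\tau) + \sqrt{(s-c-c'+\tau)^2 + 4c'(s+\tau)}\,\Bigr],
\]
and a direct check gives $\chi(\tau) = -\mu\,\lambda_\mu$ for $\mu = -\sqrt{2\tau}$. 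Analyzing $\mu \mapsto \lambda_\mu$ on $(-\infty,0)$ is thus equivalent to analyzing $\tau \mapsto \lambda(\tau) = \chi(\tau)/\sqrt{2\tau}$ on $(0,\infty)$.

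Three properties of $\chi$ drive everything, and I would establish them in turn. First, $\chi(0) > 0$: the matrix at $\tau = 0$ has negative determinant $-c's < 0$ (using $c', s > 0$), so one eigenvalue is strictly positive. Second, $\chi'(\tau) > 0$ on $[0, \infty)$, which follows from direct differentiation of the closed-form expression. Third, and this is the only non-routine algebraic step, $\chi$ is strictly convex on $[0, \infty)$: a short calculation yields
\[
\chi''(\tau) = \frac{2cc'}{\bigl[(s-c-c'+\tau)^2 + 4c'(s+\tau)\bigr]^{3/2}} > 0.
\]
Together with the asymptotic $\chi(\tau) \sim \tau$ as $\tau \to \infty$, these imply $\lambda(\tau) \to +\infty$ both as $\tau \to 0^+$ (since $\chi(0)>0$) and as $\tau \to \infty$ (since $\lambda(\tau) \sim \sqrt{\tau/2}$). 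Differentiability of $\mu \mapsto \lambda_\mu$ on $(-\infty,0)$ is automatic, since the radicand is bounded below by $4c'(s+\mu^2/2) > 0$.

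For uniqueness of the minimum I would introduce $H(\tau) \defeq 2\tau\chi'(\tau) - \chi(\tau)$, so that $\lambda'(\tau) = H(\tau)/(2\tau)^{3/2}$ and the zeros of $\lambda'$ are precisely those of $H$. The strict convexity and monotonicity of $\chi$ give $H'(\tau) = \chi'(\tau) + 2\tau\chi''(\tau) > 0$, so $H$ is strictly increasing on $(0,\infty)$; since $H(0^+) = -\chi(0) < 0$ and $H(\tau) \to +\infty$, there is exactly one $\tau^* > 0$ with $H(\tau^*) = 0$. Pulling back through the reparametrization yields the unique critical point $\mu^* = -\sqrt{2\tau^*}$, which must be the global minimum by the boundary behaviour, and strict positivity $\lambda^* = \chi(\tau^*)/\sqrt{2\tau^*} > 0$ is then immediate. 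The only genuinely delicate step is the strict convexity of $\chi$: for symmetric matrices it would follow from general variational principles, but $\tau A + Q + R$ is non-symmetric, and what saves us is that it is an affine rank-one perturbation of its value at $\tau = 0$, allowing the explicit computation of $\chi''$ above.
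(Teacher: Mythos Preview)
Your argument is correct. The reparametrization $\tau=\mu^2/2$ and the regrouping of the radicand as $(s-c-c'+\tau)^2+4c'(s+\tau)$ are both easily checked, and the computation $\chi''(\tau)=2cc'\,/\,[(s-c-c'+\tau)^2+4c'(s+\tau)]^{3/2}$ is right; from this, the monotonicity of $H(\tau)=2\tau\chi'(\tau)-\chi(\tau)$ and hence the uniqueness of the critical point follow cleanly.

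The paper's proof (Lemmas~\ref{lem:speedfunctionpositivity}--\ref{lemma:unique_minima} in the Appendix) reaches the same conclusion by a genuinely different route for the uniqueness part. Positivity and the boundary behaviour $\lambda_\mu\to\infty$ as $\mu\to 0^-$ and $\mu\to-\infty$ are obtained as you do (via Perron--Frobenius--Seneta and direct limits), but uniqueness is argued by a counting trick: if two local minima existed, an intermediate level $\lambda^\#$ would be attained at four distinct values of~$\mu$, whereas $\mu\mapsto\det\bigl(\tfrac12\mu^2A+Q+R+\mu\lambda^\# I_2\bigr)$ is a cubic in~$\mu$ and so has at most three real roots. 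Your approach trades this algebraic degree argument for an analytic one (strict convexity of the Perron eigenvalue), at the cost of one explicit second-derivative computation. What you gain is a self-contained proof that does not rely on the function being non-constant on intervals, and additional quantitative information (the exact form of $\chi''$) that could be useful for finer shape estimates. What the paper's argument buys is that it avoids any differentiation of the square root and would generalize more readily to higher-dimensional $Q$-matrices where an explicit formula for the Perron root is unavailable but the characteristic polynomial still has controlled degree.
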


Again, this is not hard to check and we refer to the Appendix for a proof. 
 For model variant II, we obtain for the linearized system the matrices
\begin{align*}
    \tilde A=\begin{pmatrix}
0 & 0\\
0&1
\end{pmatrix},\quad 
\tilde Q=\begin{pmatrix}
-\tilde c & \tilde c\\
\tilde c'&-\tilde c'
\end{pmatrix},\quad 
\tilde R=\begin{pmatrix}
\tilde{\tt s} & 0\\
0&0
\end{pmatrix}
\end{align*}
and similar arguments as above lead to the positive solution
\begin{align*}
\tilde \lambda_{\tilde\mu}:=\tilde \lambda_{\tilde\mu}^+ &= -\frac{1}{2\tilde \mu}\left(\tilde{\tt s}-\tilde c'-\tilde c+\sqrt{{\tilde c}^2+2\, \tilde c\,\tilde c'+\tilde c\,{\tilde \mu}^2-2\,\tilde c\,\tilde{\tt s}+(\tilde c')^2-\tilde c'\,{\tilde \mu}^2+2\,\tilde c'\,\tilde{\tt s}+\frac{{\tilde\mu}^4}{4}-{\tilde \mu}^2\,\tilde{\tt s}+\tilde{\tt s}^2}+\frac{{\tilde \mu}^2}{2}\right)
\end{align*}
of the corresponding eigenvalue problem. We define $ \tilde \lambda^*$ and $\tilde \mu^*$ as the quantities from Proposition \ref{prop:speed_function} (i.e.\ as the global minimum resp.\ minimizer) corresponding to the speed function of variant II. They satisfy analogous properties and results, and the details are again omitted for brevity. 

\medskip

Below we will see that the minimal values of the speed functions, $\lambda^*=\lambda_{\mu^*}$ and ${\tilde \lambda}^*=\tilde \lambda_{{\tilde\mu}^*}$, indeed provide the critical wave speeds of monotone travelling wave solutions for model variants I and II. Figure \ref{fig:speed_functions} depicts these speed functions, and their minimizers, for parameters $c=\tilde c= c'= \tilde c'= {\tt s}= \tilde{\tt s}=1$ in models I and II, and also for the classical F-KPP equation. 
Their dependence on the parameter values in our models will be further discussed in Section \ref{sec:discussion}.

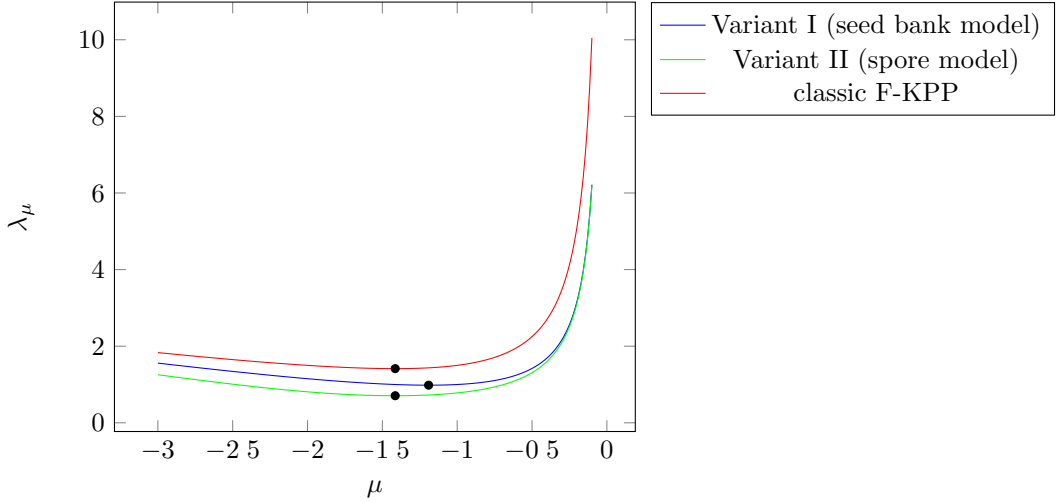
\begin{figure}[htbp]
    \centering
\begin{tikzpicture}
\begin{axis}[legend pos=outer north east, xlabel=$\mu$, ylabel=$\lambda_\mu$]

\addplot[blue,domain=-3:-0.1, samples=201] {-(-2 + x^2 + sqrt(20 + 4*x^2 + x^4))/(4*x)};
\draw[fill] (axis cs:{-1.19103,0.982416}) circle [radius=1.5pt] node[above left] {$ $};
\addlegendentry{Variant I (seed bank model)}
\addplot[green,domain=-3:-0.1, samples=201] {-(-2 + x^2 + sqrt(20 - 4*x^2 + x^4))/(4*x)};
\draw[fill] (axis cs:{-1.41421,1/sqrt(2)}) circle [radius=1.5pt] node[above left] {$ $};
\addlegendentry{Variant II (spore model)}
\addplot[red,domain=-3:-0.1, samples=201] {-(0.5*x + 1/x)};
\draw[fill] (axis cs:{-1.41421,sqrt(2)}) circle [radius=1.5pt] node[above left] {$ $};
\addlegendentry{classic F-KPP}
\end{axis}
\end{tikzpicture}
    \caption{Comparison of the speed functions for the classical F-KPP equation, and model variants I and II (for parameters $c=\tilde c=c'=\tilde c'={\tt s}=\tilde{\tt s}=1$). The dots indicate the position and size of the minimum of the respective speed function.}
\label{fig:speed_functions}
\end{figure}

\subsection{Main results}
\label{ssn:main_results}

We are now in a position to state our main results. Their proofs can be found in Sections \ref{sec:add_mart} and \ref{sec:travelling}.  We begin with results on the speed of the rightmost particle of on/off BBM, for both model variants I and II.
Recall the definition of $\lambda^*$, $\mu^*$, $ \tilde \lambda^*$ and $\tilde \mu^*$ from the previous section.

\begin{thm}[Speed of rightmost particles]
\label{thm:rightmost}
Let $(R_t)_ {t\geq 0}$ and  $(\tilde R_t)_ {t\geq 0}$ be the stochastic processes describing the position of the rightmost particle of on/off branching Brownian motion $(M_t)_{t \geq 0}$ dual to model variant I, and $(\tilde M_t)_{t \geq 0}$ dual to model variant II, each started with a single (active or dormant) particle. 
Then we have, almost surely,
\begin{align*}
\lim_{t \to \infty} \frac{R_t}{t} = \lambda^* \quad \mbox{ and } \quad 
\lim_{t \to \infty} \frac{\tilde R_t}{t} = \tilde \lambda^*.
\end{align*}
\end{thm}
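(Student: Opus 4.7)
The strategy follows the two-sided scheme of McKean \cite{McK75} and Bramson \cite{B78}, with the crucial modification that the additive martingale now carries a non-trivial weight on the two flags via the Perron--Frobenius eigenvector of the linearized generator. I sketch variant I; the argument for variant II is completely parallel, with $A$ replaced by $\tilde A$ and the active/dormant roles of motion and branching interchanged throughout.

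The workhorse is the additive martingale. For each $\mu$ in the admissible range of Proposition \ref{prop:speed_function}, let $\vec d_\mu = (d_\mu^{\boldsymbol{a}}, d_\mu^{\boldsymbol{d}})$ be the strictly positive Perron eigenvector of \eqref{eq:eigenvalue_problem}. A direct generator computation --- or equivalently the many-to-one lemma, which tags a single particle whose flag switches at rates $c, c'$ and whose spatial trajectory is a Brownian motion run only during active intervals, modulated by the population growth factor $e^{st}$ --- shows that
$$
W_\mu(t) \;:=\; \sum_{\gamma \in K_t} d_\mu^{\sigma_\gamma(t)}\, e^{\mu M_t^\gamma + \mu \lambda_\mu t}
$$
is a non-negative $(\mathcal{F}_t)$-martingale with $\mathbb{E}_{(0, \boldsymbol{a})}[W_\mu(t)] = d_\mu^{\boldsymbol{a}}$. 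The eigenvector $\vec d_\mu$ was precisely constructed to cancel the drift arising from the Laplacian, the switching matrix $Q$, and the selection term $R$ simultaneously.

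For the upper bound $\limsup_t R_t/t \leq \lambda^*$ --- equivalent, by the reflection symmetry $x \mapsto -x$ of on/off BBM, to $\liminf_t L_t/t \geq -\lambda^*$ for the leftmost particle $L_t$ --- I take $\mu = \mu^*$ and observe that on the event $\{L_t < -(\lambda^* + \varepsilon) t\}$ the contribution of the single extreme particle, using positivity of $\vec d_{\mu^*}$, gives $W_{\mu^*}(t) \geq \delta\, e^{|\mu^*|\varepsilon t}$. Markov's inequality then yields $\mathbb{P}(L_t < -(\lambda^* + \varepsilon) t) \leq C\, e^{-|\mu^*|\varepsilon t}$, and a Borel--Cantelli argument along integer times (supplemented by crude Brownian and switching fluctuation bounds to rule out overshoots between integers) produces the claim almost surely. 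Variant II is handled identically with $\tilde\mu^*$ in place of $\mu^*$.

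For the matching lower bound $\liminf_t R_t/t \geq \lambda^*$, which is the harder half, the plan is a second-moment / Paley--Zygmund computation on $W_\mu$ for $\mu$ in a neighbourhood of $\mu^*$ where $\sup_t \mathbb{E}[W_\mu(t)^2] < \infty$: this yields $L^2$- and a.s.\ convergence of $W_\mu$ to a non-degenerate limit $W_\mu^\infty > 0$ on the (trivially almost sure) survival event, and a spine change-of-measure argument in the vein of Lalley--Sellke \cite{LS87} then identifies the leading-order contribution to $W_\mu$ with a tagged spine particle whose position grows linearly at rate $\lambda^*$, forcing the existence of particles near $\pm \lambda^* t$. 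The principal new obstacle compared to classical BBM is the non-self-adjoint flag coupling: because $\vec d_\mu$ has distinct positive entries $d_\mu^{\boldsymbol{a}} \neq d_\mu^{\boldsymbol{d}}$, the second-moment computation has to track pairs $(\sigma_\gamma(t), \sigma_\delta(t))$ along the correlated flag histories of siblings, and the branching rate, which acts only on active particles, enters non-trivially. This bookkeeping is more demanding in variant II, where active particles branch but do not move and dormant particles move but do not branch, so that the trajectory of any particle is a Brownian motion indexed by its cumulative dormant time; $\tilde\lambda^*$ precisely encodes the optimal trade-off between time spent active (to branch) and dormant (to move), which also explains the more pronounced slow-down in the spore model noted in the abstract.
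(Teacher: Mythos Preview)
Your additive martingale is set up correctly, and the upper bound via Markov's inequality at $\mu=\mu^*$ plus Borel--Cantelli is valid --- in fact more elementary than the paper's route. The paper first proves (Section~2.3) that $W_\mu(t)\to 0$ a.s.\ for every $\mu<\mu^*$, via a Perron--Frobenius argument on the matrix $\frac12(\mu p)^2A+Q+R+\mu p\lambda I_2$ for $p\uparrow 1$, and only then reads off $L_t+\lambda t\to+\infty$ from the single leftmost summand. Your direct first-moment bound bypasses that section entirely.

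The lower bound, however, has a genuine gap. Two issues. First, the $L^2$ claim is dangerous exactly where you need it: the paper obtains $L^p$ boundedness (Theorem~\ref{thm:convergence}) via Neveu's inequality, and the proof requires $\lambda_\mu>\lambda_{p\mu}$, which for $p=2$ means $\lambda_\mu>\lambda_{2\mu}$ and forces $\mu$ bounded away from $\mu^*$; near $\mu^*$ one must take $p\downarrow 1$. Second, and more importantly, the spine under the $W_\mu$-tilt does \emph{not} travel at rate $\lambda^*$ for fixed $\mu\in(\mu^*,0)$: its speed is $\mu$-dependent (heuristically $-\partial_\mu(\mu\lambda_\mu)$), so you still have to send $\mu\to\mu^*$ and argue the conclusion survives the limit --- and you do not say how, nor how to compute the spine speed for a process whose Girsanov transform is obstructed by the random quadratic variation of on/off Brownian motion. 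The paper sidesteps spine dynamics altogether. After establishing $W_\mu\to W_\mu^\infty>0$ in $L^1$ it changes to $\mathrm{d}\mathbb{Q}^\mu=W_\mu^\infty\,\mathrm{d}\mathbb{P}$ and observes that $N^1_t:=\partial_\mu W_\mu(t)/W_\mu(t)$ and $N^2_t:=\partial_\mu^2 W_\mu(t)/W_\mu(t)$ are $\mathbb{Q}^\mu$-martingales. Writing $N^1_t$ as a convex combination over particles yields $N^1_t/t\ge L_t/t+\partial_\mu(\mu\lambda_\mu)+o(1)$, while Doob's maximal inequality together with the bound $\mathbb{E}_{\mathbb{Q}^\mu}[(N^1_t)^2]\le C_1+C_2 t$ (extracted from $N^2$) forces $N^1_t/t\to 0$ a.s. Hence $\limsup_t L_t/t\le -\partial_\mu(\mu\lambda_\mu)$ for every $\mu>\mu^*$, and letting $\mu\to\mu^*$ with $\partial_\mu\lambda_\mu|_{\mu=\mu^*}=0$ gives $\limsup_t L_t/t\le-\lambda^*$; reflection symmetry then yields $\liminf_t R_t/t\ge\lambda^*$. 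This derivative-martingale device (Neveu, adapted to the multitype setting in \cite{C97}) is the concrete mechanism your sketch is missing.
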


Using the McKean representation \eqref{eq:McKean}, the position of the rightmost particle gives information about the solution of the dual F-KPP equations. While this already establishes a `speed' of propagation when started in a Heaviside initial condition (see Theorem \ref{thm:critical_speed} below), the result can be significantly strengthened.

\begin{thm}[Existence of travelling wave solutions]
\label{thm:existence_of_supercritical_waves}
For each $\mu \in\, ]\mu^*, 0[$, there exists a
 solution $(f_\mu,g_\mu)$ to the travelling wave equation \eqref{eq:TravellingWaveI} with speed $\lambda=\lambda_\mu$ such that $f_\mu$ and $ g_\mu$ are increasing from $0$ to $1$.
In particular, there exist travelling wave solutions to Equation \eqref{eq:FKPPDormancy} for all speeds $\lambda>\lambda^*$. The analogous statement  holds for model variant II, i.e.\ for the system \eqref{eq:FKPPDormancyII}, in terms of $\tilde \mu^*$ and $\tilde \lambda^*$.
\end{thm}

Regarding the shape of the travelling waves 
in this `supercritical case', that is, for wave speeds strictly greater than the critical speed, we obtain the following asymptotic decay result.
Recall that $({\tt f}_\mu,{\tt g}_\mu)$, defined in \eqref{eq:f,g-specific}, solves the linearized travelling wave equation \eqref{eq:LinearTravellingWAVE} with speed $\lambda=\lambda_\mu$.

\begin{prop}[Asymptotic decay of travelling waves] 
\label{prop:supercritical_travelling_wave_asymptotics}
Let $\mu \in\, ]\mu^*, 0[$. Then the corresponding travelling wave 
$( f_\mu,  g_\mu)$ 
from Theorem \ref{thm:existence_of_supercritical_waves} satisfies the asymptotic relationship
\begin{align*}
	(1- f_\mu (x)) \sim {\tt f}_\mu(x) \quad \mbox{ and } \quad
	(1-g_\mu (x)) \sim {\tt g}_\mu(x)
\end{align*}
as $x \to \infty$.
The analogous statement holds for model variant II in terms of $\tilde \mu^*$.
\end{prop}

The asymptotic shape of the travelling waves is thus described by the solutions to the linearized systems. In fact, depending on the decay rate of the initial condition, in the supercritical case all solutions converge to the corresponding travelling waves.

\begin{thm}[Convergence of solutions in the supercritical regime]
\label{thm:supercritical_travelling_wave_asymptotics}
Consider the solution to system \eqref{eq:FKPPDormancy} with 
initial conditions $u_0,v_0\in \mathcal{B}(\mathbb{R}, [0,1])$. 
Then, for any $\mu \in\, ]\mu^*, 0[$ we have that if
\begin{align*}
    1-u_0(x) \sim  {\tt f}_\mu(x) \quad \mbox{ and } \quad
    1-v_0(x) \sim  {\tt g}_\mu(x)\quad \mbox{ as } x \to \infty,
\end{align*}
then for all $x\in \R$
\begin{align*}
    u(t,x+\lambda_\mu t) \to  f_\mu(x) \quad \mbox{ and } \quad
    v(t,x+\lambda_\mu t) \to  g_\mu(x) \quad \mbox{ as } t \to \infty.
\end{align*}
The analogous statement holds for the system \eqref{eq:FKPPDormancyII} in terms of $\tilde \mu^*$.
\end{thm}

As for the classcial F-KPP equation, there are no monotone travelling wave solutions with speed strictly below $\lambda^*$ (subcritical case).

\begin{thm}
\label{thm:no_subcritical_monotone_waves}
There are no travelling wave solutions for system \eqref{eq:FKPPDormancy} 
 resp.\  \eqref{eq:FKPPDormancyII} increasing from $0$ to $1$ with speeds $0 \leq \lambda < \lambda^*$ resp. $0 \leq \tilde \lambda < \tilde \lambda^*$.
\end{thm}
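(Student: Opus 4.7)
The plan is to argue by contradiction via linearization at the leading edge of the wave, in the spirit of Kolmogorov--Petrovski--Piscounov. Below the critical speed, the linearized traveling-wave system admits no negative real decay exponent; its only decaying modes are oscillating complex conjugate pairs, which is incompatible with a monotone approach to the leading-edge equilibrium.

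Suppose for contradiction that $(f, g)$ is a monotone traveling wave of \eqref{eq:FKPPDormancy} with speed $\lambda \in [0, \lambda^*)$; then $(f, g)$ is increasing from $(0,0)$ at $-\infty$ to $(1,1)$ at $+\infty$. Pass to the complementary pair $(\tilde f, \tilde g) := (1 - f, 1 - g)$, which is monotone decreasing with $\tilde f(+\infty) = \tilde g(+\infty) = 0$ and satisfies the nonlinear traveling-wave equation in the ``advantageous'' formulation. Near the leading edge $x \to +\infty$, the nonlinear selection term expands as $s\tilde f + O(\tilde f^2)$ with $s$ as in \eqref{eq:LinearFKPPDormancy}, so the wave equation agrees with \eqref{eq:LinearTravellingWAVE} to leading order.

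Introducing $H := \tilde f'$ and recasting the linearized equation as a first-order ODE in $(\tilde f, H, \tilde g)\in \R^3$, the linearization at the origin has characteristic polynomial
\begin{equation*}
P(\mu; \lambda) = \bigl(\tfrac{\mu^2}{2} + \lambda \mu + s - c\bigr)\bigl(\lambda \mu - c'\bigr) - c c',
\end{equation*}
whose negative real roots are, by construction of the speed function, precisely those $\mu < 0$ solving $\lambda_\mu^+ = \lambda$. By Proposition \ref{prop:speed_function}, this set is empty whenever $\lambda < \lambda^*$. Since $P(0;\lambda) = -c's < 0$ and the leading coefficient $\lambda/2$ is positive (the degenerate case $\lambda = 0$ reduces to $\tilde f = \tilde g$ and $\tfrac{1}{2}\tilde f'' + s\tilde f = 0$, whose explicit cosine solutions cannot be monotone and nonnegative), one real root of $P$ is positive and the remaining two form a complex conjugate pair $a \pm i b$ with $b \neq 0$. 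Standard stable-manifold theory at the hyperbolic fixed point $(0,0,0)$ then forces any trajectory of the nonlinear system converging to the origin as $x \to \infty$ to decay asymptotically like $e^{ax}\bigl(\alpha \cos(bx) + \beta \sin(bx)\bigr)$ with $b \neq 0$; this makes $\tilde f$ change sign infinitely often, contradicting monotonicity. If $a \ge 0$, the stable manifold has dimension $\le 1$, meaning no trajectory can approach the origin through generic directions, yielding the same contradiction.

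The argument for variant II is entirely parallel: the complementary pair satisfies a wave equation whose linearization is again a three-dimensional ODE with cubic characteristic polynomial whose negative real roots correspond to values of $\tilde \lambda^+_{\tilde \mu}$ and hence are absent for $\tilde \lambda < \tilde \lambda^*$. \textbf{Main obstacle.} The principal technical point is making rigorous the asymptotic match between the nonlinear traveling wave and the linear flow at the equilibrium, in particular confirming hyperbolicity of the linearization for every $\lambda \in [0, \lambda^*)$ so that Hartman--Grobman / stable-manifold theorems apply directly. I would handle this either by such manifold theorems or, more elementarily and à la Bramson, by analyzing the log-derivative $\tilde f'/\tilde f$: any subsequential limit as $x \to \infty$ must be a real root of $P(\cdot;\lambda)$ of the correct sign, and no such root exists when $\lambda < \lambda^*$.
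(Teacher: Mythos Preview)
Your approach via linearization at the leading edge and stable-manifold analysis is the classical analytic route (in the KPP tradition) and is essentially correct, modulo the technical verifications you yourself flag. The paper, however, takes a completely different, purely probabilistic path that bypasses ODE theory altogether. Assuming a monotone wave $(f^\lambda,g^\lambda)$ of speed $\lambda<\lambda^*$ exists, Proposition~\ref{prop: multmartingale} shows that the multiplicative functional
\[
Y^\lambda_t=\prod_{\alpha\in I_t} f^\lambda(M_t^\alpha+\lambda t)\prod_{\beta\in J_t} g^\lambda(M_t^\beta+\lambda t)
\]
is a bounded martingale, hence converges a.s.\ and in $L^1$ to some limit $Y^\lambda$. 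But $Y^\lambda_t$ is dominated by $f^\lambda(L_t+\lambda t)\vee g^\lambda(L_t+\lambda t)$, and the already-established speed result $L_t/t\to-\lambda^*$ forces $L_t+\lambda t\to-\infty$, so $Y^\lambda=0$ a.s.; then $f^\lambda(x)=\E_{(x,\boldsymbol a)}[Y^\lambda]=0$, contradicting that the wave runs from $0$ to $1$.

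Your route is self-contained on the PDE side and does not rely on the dual branching process or on the results of Section~\ref{subsec:Speed}; its cost is the regularity/hyperbolicity verification and a small case analysis on the root structure of $P$ (e.g.\ ruling out that the two non-positive roots are real, which follows since the product of the two eigenvalues of $\tfrac12\mu^2A+Q+R$ is $-\tfrac{c'}{2}\mu^2-c's<0$, forcing $\lambda_\mu^-<0$ for $\mu<0$). The paper's route is extremely short once the rightmost/leftmost particle speed is known, needs no spectral or dynamical-systems input, and fits the probabilistic programme of the paper; its cost is that it presupposes the martingale machinery and Theorem~\ref{thm:rightmost}. Both arguments are valid.
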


The speed of propagation of the advantageous allele in systems with dormancy, when started in Heaviside initial conditions, falls into the critical regime. For this important case, we have the following result.

\begin{thm}[Speed of propagation of the beneficial allele in the critical regime]
\label{thm:critical_speed}
Consider the solution to Equation \eqref{eq:FKPPDormancy} with Heaviside initial conditions $u_0=v_0=\ind_{\R^+ }$. 
Let $x\in \R$.
Then, 
for all $\lambda>\lambda^*$ we have
\begin{align*}
    u(t,x+\lambda t) \to 1 \quad \mbox{ and } \quad 
    v(t,x+\lambda t) \to 1
\end{align*}
as $t \to \infty$, and 
for all $\lambda<\lambda^*$ we have
\begin{align*}
    u(t,x+\lambda t)\to 0 \quad \mbox{ and } \quad 
    v(t,x+\lambda t)\to 0
\end{align*}
as $t \to \infty$. The analogous statement holds for the system \eqref{eq:FKPPDormancyII} in terms of $\tilde \lambda^*$.
\end{thm}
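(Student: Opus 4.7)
The plan is to reduce the statement directly to the law of large numbers for the rightmost particle, Theorem \ref{thm:rightmost}, via the McKean representation from Proposition \ref{prop:duality}. Note that the Heaviside initial condition decays faster than any exponential, so Theorem \ref{thm:supercritical_traveling_wave_asymptotics} does not apply here; we instead exploit the dual spatial branching process directly.

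For variant I, substituting $u_0=v_0=\ind_{\R^+}$ into the McKean formula yields $u(t,y)=\PP_{(y,\boldsymbol{a})}(\min_{\gamma\in K_t} M_t^\gamma \ge 0)$. By translation invariance of the o/oBBM dynamics, this equals $\PP_{(0,\boldsymbol{a})}(L_t \ge -y)$ for the leftmost particle $L_t$, and the reflection symmetry $x\mapsto -x$ (respected by Brownian dispersal, switching, and branching) gives $L_t \stackrel{d}{=} -R_t$, hence
\begin{align*}
u(t,y) = \PP_{(0,\boldsymbol{a})}(R_t \le y), \qquad v(t,y) = \PP_{(0,\boldsymbol{d})}(R_t \le y).
\end{align*}
Theorem \ref{thm:rightmost} supplies $R_t/t \to \lambda^*$ a.s.\ under $\PP_{(0,\boldsymbol{a})}$. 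To transfer the LLN to $\PP_{(0,\boldsymbol{d})}$, observe that the initial dormant particle resuscitates after an $\Exp(c')$-distributed time $T<\infty$ during which it neither moves nor branches, so from time $T$ on the process is equal in law to one started from $(0,\boldsymbol{a})$; since $T/t\to 0$ a.s., the LLN carries over.

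Fixing $x \in \R$ and $\lambda > \lambda^*$, choose $\eps \in (0,\lambda-\lambda^*)$. For $t$ sufficiently large, $(\lambda^*+\eps)t \le x+\lambda t$, so
\begin{align*}
u(t,x+\lambda t) \;\ge\; \PP\bigl(R_t/t \le \lambda^*+\eps\bigr) \;\longrightarrow\; 1
\end{align*}
by almost sure convergence, and likewise for $v$. For $\lambda < \lambda^*$, choosing $\eps \in (0,\lambda^*-\lambda)$, one has $x+\lambda t \le (\lambda^*-\eps)t$ eventually, hence
\begin{align*}
u(t,x+\lambda t) \;\le\; \PP\bigl(R_t/t \le \lambda^*-\eps\bigr) \;\longrightarrow\; 0,
\end{align*}
with the same bound for $v$.

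The argument for variant II is structurally identical: the McKean formula for \eqref{eq:FKPPDormancyII} yields $\tilde u(t,y)=\PP_{(0,\boldsymbol{a})}(\tilde R_t \le y)$ and $\tilde v(t,y)=\PP_{(0,\boldsymbol{d})}(\tilde R_t \le y)$ (the Brownian motion is now carried by the dormant particles, but reflection symmetry is unaffected), and $\tilde R_t/t \to \tilde\lambda^*$ from Theorem \ref{thm:rightmost} closes the proof in exactly the same fashion. The substance of the theorem is entirely absorbed into Theorem \ref{thm:rightmost}; the only mild obstacle is the coupling passing the active-start LLN to a dormant-start LLN, which is immediate because the pre-wake-up interval is a.s.\ finite.
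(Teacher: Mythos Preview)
Your proof is correct and follows essentially the same route as the paper: both reduce the claim to the McKean representation $u(t,y)=\PP_{(0,\boldsymbol{a})}(R_t\le y)$, $v(t,y)=\PP_{(0,\boldsymbol{d})}(R_t\le y)$ together with the law of large numbers $R_t/t\to\lambda^*$ from Theorem~\ref{thm:rightmost}. You actually fill in two details the paper leaves implicit---the derivation of the representation from Proposition~\ref{prop:duality} via translation and reflection symmetry, and the coupling argument transferring the LLN from the active to the dormant start (Theorem~\ref{thm:rightmost} is stated only for $M_0=(0,\boldsymbol{a})$)---so your version is, if anything, slightly more complete.
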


Again we see that the beneficial allele propagates at a linear speed, which can be computed explicitly, depending on the model parameters.  However, we currently lack finer results on travelling wave solutions. Some of the reasons and difficulties will be discussed in Section \ref{sec:discussion}.

\begin{remark}
In simulations one may see the emergence of approximate travelling waves when the system is started from Heaviside initial conditions (see Figure \ref{fig:1}). 
Theorem \ref{thm:critical_speed} 
establishes that the bulk of the mass of the approximate waves cannot move faster or slower than $\lambda^*t$ for large $t$ which is why we speak of approximate travelling waves of speed $\lambda^*$.
\end{remark}

\begin{figure}[h]
\centering
	\includegraphics[scale=0.3]{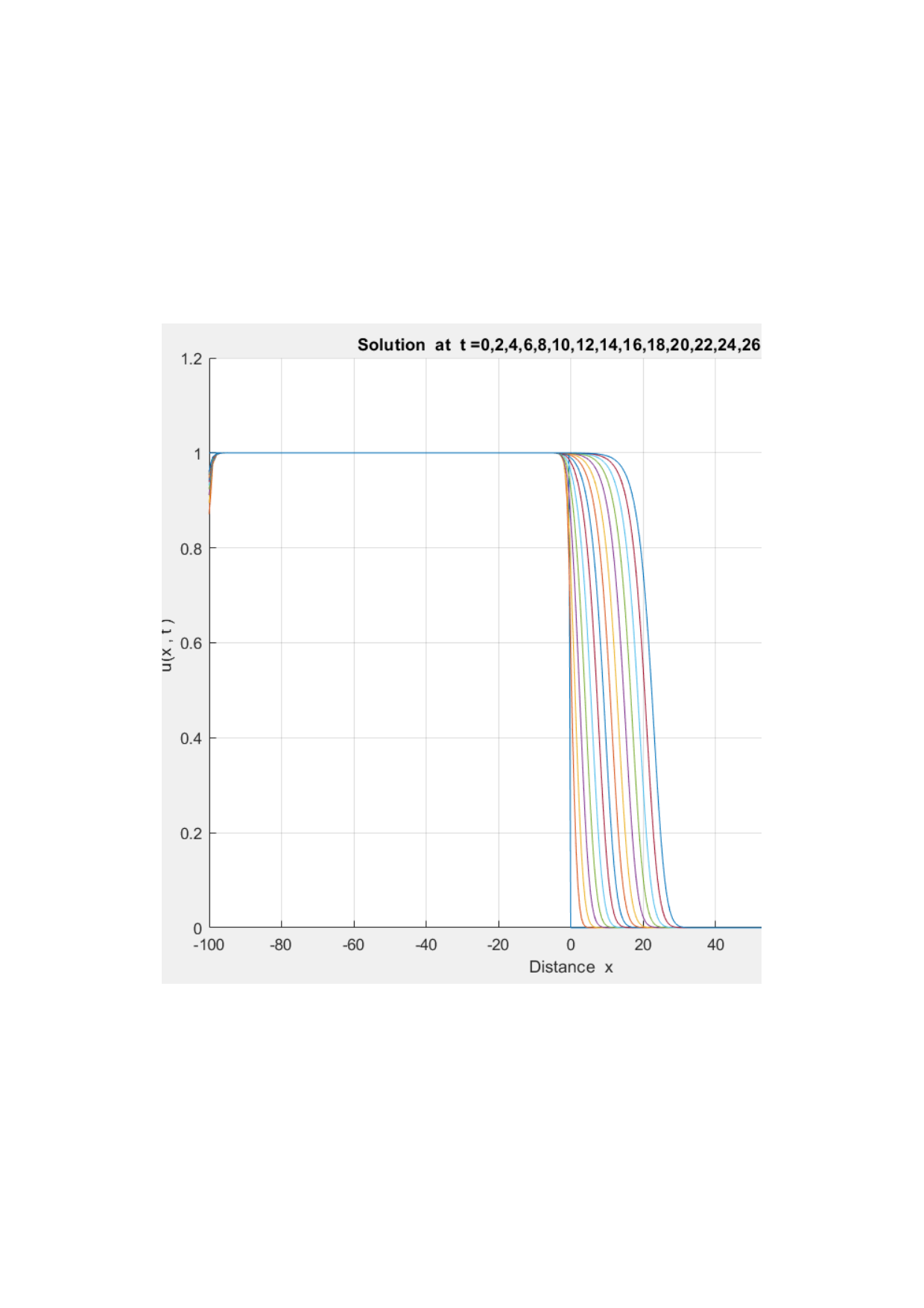}
	\includegraphics[scale=0.3]{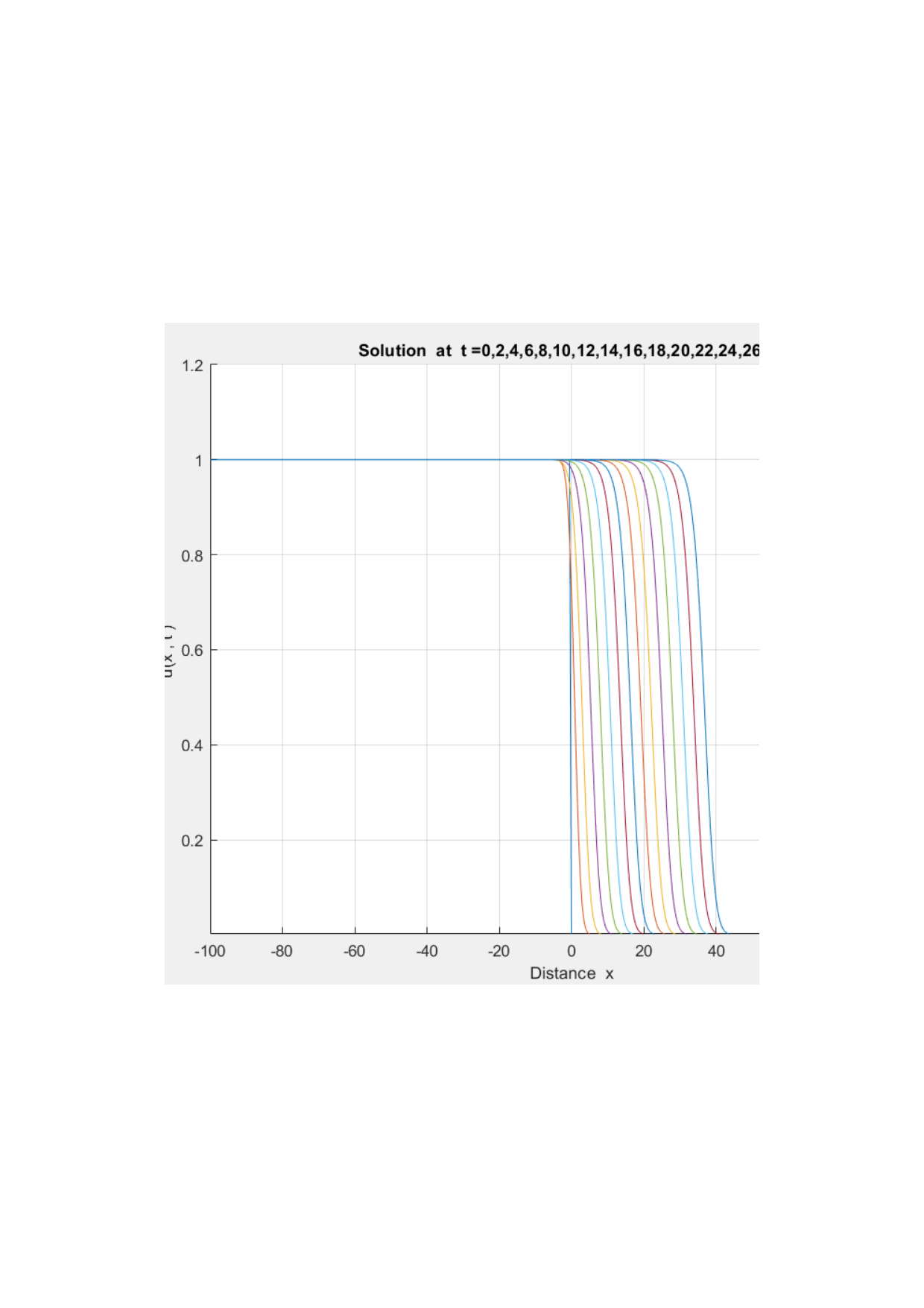}
	\vspace{-2cm}
	\caption{Active component $(1-u)$ of the F-KPP equation with seed bank (left) compared to the classical F-KPP equation (right), started from reversed Heaviside initial conditions. The position of the wave-front is drawn at the same consecutive time-points each, indicating that the presence of the seed bank significantly reduces the speed of propagation of the wave front. }
\label{fig:1}
\end{figure}

While both model variants so far exhibit the same qualitative behaviour, it is certainly interesting to investigate the quantitative differences. Unfortunately, due to the rather implicit description of the speed functions $\lambda_\bullet$ and $\tilde \lambda_\bullet$, 
it is not easy to achieve general analytic results. The following is an example of what can be observed for fixed sets of parameters. However, all quantities are readily accessible via simulation. 

Note that the result below already shows that for `unit parameters', dormancy slows the speed of propagation of beneficial alleles more severely in the spore model than in the seed bank model, and both significantly reduce the spread of beneficial alleles in comparison to the classical F-KPP model. The picture emerging for other parameter choices is quite rich, and a more detailed discussion with concrete values using numerical methods will be provided in Section \ref{sec:discussion}.

\begin{prop}
\label{prop:relation}
For $c=\tilde c=c'=\tilde c'={\tt s}=\tilde{\tt s}=1$, we have that $\tilde \lambda_\mu<  
\lambda_\mu 
< \lambda^{\rm \tiny  classical}_\mu$ for each $\mu<0$, 
and in particular that $\tilde \lambda^* < \lambda ^* < \lambda^{*, {\rm \tiny classical}}$.
\end{prop}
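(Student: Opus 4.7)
The plan is to verify the three chained inequalities by direct substitution into the closed-form speed functions recorded in Section~\ref{ssn:travel_waves} and to transfer strict pointwise bounds to the minimizers. Inserting $c=c'=s=1$ into the expression for $\lambda_\mu^+$ the radicand collapses to $\mu^4/4+\mu^2+5$, and inserting $\tilde c=\tilde c'=\tilde s=1$ into the expression for $\tilde\lambda_{\tilde\mu}^+$ it collapses to $\mu^4/4-\mu^2+5$, so that
\begin{align*}
\lambda_\mu &= -\frac{1}{4\mu}\Bigl(\mu^2-2+\sqrt{\mu^4+4\mu^2+20}\Bigr),\\
\tilde\lambda_\mu &= -\frac{1}{4\mu}\Bigl(\mu^2-2+\sqrt{\mu^4-4\mu^2+20}\Bigr),
\end{align*}
while the classical speed function is $\lambda^{\rm classical}_\mu=-\mu/2-1/\mu$.

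First I would establish the inequality $\tilde\lambda_\mu\leq\lambda_\mu$. Since $-1/(4\mu)>0$ for $\mu<0$, it is equivalent to $\sqrt{\mu^4-4\mu^2+20}\leq\sqrt{\mu^4+4\mu^2+20}$, which is obvious and in fact strict for every $\mu\neq 0$. Next, for $\lambda_\mu\leq\lambda^{\rm classical}_\mu$ I would multiply through by the positive quantity $-4\mu$ to reduce the claim to $\sqrt{\mu^4+4\mu^2+20}\leq \mu^2+6$; since both sides are positive, squaring produces the equivalent estimate $0\leq 8\mu^2+16$, again trivially true and strict for all $\mu\in\R$. This settles the pointwise chain.

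Finally, to separate the minima I would exploit the fact that both pointwise inequalities are strict on $]-\infty,0[$. Writing $\mu^{*,\rm cl}=-\sqrt{2}$ for the classical minimizer of $\lambda^{\rm classical}_\mu$ and using Proposition~\ref{prop:speed_function} to locate $\mu^{*}$ in $]-\infty,0[$, I would chain
\begin{equation*}
\lambda^*=\min_{\mu<0}\lambda_\mu\leq \lambda_{\mu^{*,\rm cl}}<\lambda^{\rm classical}_{\mu^{*,\rm cl}}=\lambda^{*,\rm classical},
\end{equation*}
and analogously $\tilde\lambda^*\leq\tilde\lambda_{\mu^*}<\lambda_{\mu^*}=\lambda^*$, which yields the strict separation. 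I do not anticipate any substantial obstacle: every step reduces to an elementary manipulation of radicals, and the only point requiring genuine attention is the careful bookkeeping of signs when multiplying or dividing by the negative quantity $\mu$.
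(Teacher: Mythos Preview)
Your proposal is correct and follows essentially the same approach as the paper: substitute the unit parameters into the closed-form speed functions, compare the resulting radicals directly, and read off the ordering. The paper records the same three formulas (in an equivalent normalization) and then simply declares the pointwise chain ``straightforward'' and invokes the uniqueness of the global minimum; your write-up actually carries out the elementary radical comparisons and gives a cleaner argument for the \emph{strict} separation of the minima via $\tilde\lambda^*\le\tilde\lambda_{\mu^*}<\lambda_{\mu^*}=\lambda^*$, so if anything you are more explicit than the original.
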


This situation is depicted in Figure \ref{fig:speed_functions}. The concrete values can be computed numerically (where necessary) and yield
$$
\tilde \lambda^* =\frac{1}{\sqrt{2}}  , \quad \lambda ^* \approx 0{.}98, \quad \lambda^{*, {\rm \tiny classical}}= \sqrt{2}.
$$

The proof is straightforward, see again the Appendix for details.

\subsection{Organization of the paper}

The proofs for the derivation of the critical wave speed and the existence of a corresponding travelling wave solution follow well-trodden yet elegant paths and employ a convergence analysis of suitable additive and multiplicative martingales. 

Indeed, in Section \ref{sec:add_mart}, we use the solutions to the linearized wave equation 
\eqref{eq:LinearTravellingWAVE} to construct a corresponding additive martingale based on the dual on/off branching Brownian motion. 
The convergence properties of this martingale then provide information about the asymptotic speed of the rightmost particle in the on/off branching Brownian motion. 

In Section \ref{sec:travelling}, we establish the existence and properties of travelling wave solutions.
This is done again following a well-known general recipe by characterizing solutions to the travelling wave equation 
 \eqref{eq:TravellingWaveI} 
in terms of suitable multiplicative martingales.

We complete the paper by a discussion (Section \ref{sec:discussion}) of the  wave speed in all model variants in dependence on the underlying parameters, by addressing open problems related in particular to the critical case, and by outlining possible future research. 

Finally, the Appendix (Section \ref{sec:Appendix}) establishes technical results on the speed function which are needed in the previous sections.

\section{Additive Martingales and the speed of the rightmost particle in on/off BBM}
\label{sec:add_mart}

In this section, we focus on model variant I. Analogous results and proofs can be obtained for variant II, but will be skipped for brevity. We employ the classical martingale approach 
pioneered by Watanabe \cite{Wat67} and later used by McKean \cite{McK75} and Neveu \cite{N88}, among others. 

\subsection{The additive martingale $(X_t^{\lambda_\mu})_{t\ge0}$}\label{sec:add_mart-1}

For $\mu < 0$, let $\tt (f_\mu,g_\mu)$ be given as in Equation \eqref{eq:f,g-specific} with decay rate $\mu$.
We recall that $\tt (f_\mu,g_\mu)$ solves the linearized travelling wave equation \eqref{eq:LinearTravellingWAVE} with speed $\lambda=\lambda_\mu>0$,
where $\lambda_\mu$ is the value of the speed function from Proposition \ref{prop:speed_function} at $\mu$. 
Finally, let $(M_t)_{t \ge 0}$ be an on/off BBM as defined in Definition \ref{def:VariantI} (we use all notations introduced there) and $(\mathcal{F}_t)_{t \ge 0}$ 
its canonical filtration. 
The following observation is key to the results in this section.

\begin{prop}
\label{prop:true_mart}
For all $\mu<0$, the process $(X^{\lambda_\mu}_t)_{t\geq 0}$ given by
\begin{align}\label{eq:additive-martingale}
    X^{\lambda_\mu}_t := \sum_{\alpha \in I_t} {\tt f}_\mu(M_t^\alpha+\lambda_\mu t )+ \sum_{\beta \in J_t} {\tt g}_\mu(M_t^\beta+\lambda_\mu t), \quad t \ge 0
\end{align}
is a square-integrable nonnegative martingale wrt $(\mathcal F_t)_{t \ge 0}$. 
In particular, $(X^{\lambda_\mu}_t)_{t\ge0}$ converges almost surely to a nonnegative integrable random variable $X^{\lambda_\mu}$ as $t\to\infty$.
\end{prop}

The use of these so-called additive martingales dates back to Watanabe \cite{Wat67}.
This seems to be the earliest reference where martingale methods were used for the study of branching Markov processes, which were systematically introduced around the same time in \cite{INW68a,INW68b, INW69}. In the following, we briefly describe how our model (i.e. on/off BBM) fits into this framework; we temporarily adapt our notation accordingly. 

Let $(X_t)_{t\ge0}=(B_t,\sigma_t)_{t\ge0}$ denote an on/off Brownian motion (\emph{without} branching) with switching rates $c$ resp.\ $c'$ into resp.\ out of dormancy, where $B_t$ denotes the spatial position and $\sigma_t$ the type at time $t\ge0$. This is a strong Markov process with state space $S:=\R\times\{\boldsymbol{a},\boldsymbol{d}\}$, a generic element of which we denote as $(x,\sigma)$.
The corresponding semigroup is Feller on $\mathcal{C}_0(S)$ with infinitesimal generator
\begin{align*}
\mathcal{A}h(x,\sigma)&=\ind_{\{\sigma=\boldsymbol{a} \} }\left(\frac{1}{2}\partial_x^2h(x,\boldsymbol{a})+c\left(h(x,\boldsymbol{d})-h(x,\boldsymbol{a})\right)\right)+
\ind_{\{\sigma=\boldsymbol{d}\}}c'\left(h(x,\boldsymbol{a})-h(x,\boldsymbol{d})\right)
\end{align*}
for each $h$ such that $h(x,\boldsymbol{a})$ is a $\mathcal{C}^2_c$-function in the spatial variable $x$.
From this `single-particle motion', the on/off BBM $(M_t)_{t\ge0}$ with state space $\Gamma$ of \eqref{eq:statespace} can be constructed as a 
branching Markov process in the sense of \cite{INW68a,INW68b, INW69}, where at state-dependent branching rate 
$$k(x,\sigma):=\kappa\cdot \ind_{\{\sigma=\boldsymbol{a}\}}$$
a particle at position $x$ and of type $\sigma$ branches into $l+1$ particles (located at the same position and of the same type) with probability $p_l$. 
We remark that in the terminology of \cite{INW69}, Equation \eqref{eq:FKPPDormancy} is then just the (differential form of the) so-called S-equation for this branching Markov process. 

Now following \cite[Def. 4.10, p. 138]{INW69}, we can define the \emph{expectation semigroup} of $(M_t)_{t\ge0}$ by
\begin{equation}\label{expectation-semigroup}
\mathcal{M}_th(x,\sigma):=\E_{(x,\sigma)}\left[\sum_{\alpha=1}^{N_t} h(M_t^\alpha,\sigma_t^\alpha)\right],\qquad (x,\sigma)\in S, \; t\ge0
\end{equation}
for each bounded or nonnegative function $h:S\to\R$. 
(Note that for an indicator function $h=\ind_D$, $\mathcal{M}_t\ind_D$ gives the expected number of particles of the on/off BBM in the set $D$ at time $t$.)
In our case, the expectation semigroup is a strongly continuous semigroup of bounded operators on $\mathcal{C}_0(S)$ with infinitesimal generator
\begin{equation}\label{generator-expectation-semigroup}
\mathcal{A} h+(\varrho-1)k h,
\end{equation}
where 
$$\varrho:=\sum_{l\in\N}p_l(l+1)$$ denotes the reproduction mean of the offspring distribution, see e.g. \cite[Thm. 4.14, p. 143]{INW69}.
Thus, the expectation semigroup \eqref{expectation-semigroup} is represented in terms of the single-particle motion $X$ as a Feynman-Kac semigroup
\begin{equation}\label{Feynman-Kac}
\mathcal{M}_th(x,\sigma)=\E_{(x,\sigma)}\left[\exp\left(\int_0^t(\varrho-1)k(X_r)\,\ddd r\right)h(X_t)\right],
\end{equation}
see e.g. \cite[p. 210]{Wat67}. In fact, this follows since the RHS of \eqref{expectation-semigroup} and \eqref{Feynman-Kac} are easily seen to have the same generator, namely \eqref{generator-expectation-semigroup}. Note that \eqref{Feynman-Kac} is an instance of what are today commonly called ``many-to-one'' formulae, expressing the expected number of particles of the branching process in terms of the single-particle motion $X$.\footnote{Indeed, \cite{Wat67} is the earliest reference known to us where such a ``many-to-one'' formula was established.}

Now the central observation due to \cite{Wat67} is that each eigenfunction $h$ of the 
expectation semigroup in the sense that 
\[\mathcal{M}_th=e^{-t\lambda}h,\qquad t\ge0\]
induces in a natural way a martingale via
\[Z^\lambda_t:=e^{t\lambda}\sum_{\alpha=1}^{N_t} h(M_t^\alpha,\sigma^\alpha_t)\]
see \cite[p. 216]{Wat67}.
This is in particular satisfied if $h$ is an eigenfunction of the infinitesimal generator \eqref{generator-expectation-semigroup} with eigenvalue $\lambda$.
 In our context, we want to apply the above to
the function $h=h_\mu:S\to\R$ defined by
\begin{equation}\label{eq-eigenfunction}
h_\mu(x,\sigma):=\ind_{\{\sigma=\boldsymbol{a}\}}\,{\tt f_\mu}(x)+\ind_{\{\sigma=\boldsymbol{d}\}}\,{\tt g_\mu}(x),
\end{equation}
with ${\tt f_\mu},{\tt g_\mu}$ from \eqref{eq:f,g-specific}. Recalling the eigenvalue problem \eqref{eq:eigenvalue_problem}, we see that at least formally, this is an eigenfunction of the generator \eqref{generator-expectation-semigroup} with eigenvalue $-\mu\lambda_\mu$.
However, we have to be a little bit careful because since ${\tt f_\mu},{\tt g_\mu}$ are unbounded, the above $h_\mu$ is not in the domain of the infinitesimal generator. 
In fact, it is not even a priori clear that $\mathcal{M}_th_\mu$ is finite for $t>0$. 
However, this needs only a small extra argument in the proof below, for which
we return to our previous notation and rewrite the ``many-to-one'' formula \eqref{Feynman-Kac} as follows: Write $I$ resp.\ $J$ for the random set of time points 
when the on/off Brownian motion $(B_t,\sigma_t)_{t\ge0}$ is active resp.\ dormant; in other words
\[r\in I\quad\Longleftrightarrow\quad\sigma_r=\boldsymbol{a} \qquad\text{and}\qquad r\in J\quad\Longleftrightarrow\quad\sigma_r=\boldsymbol{d}.\]
Then since $\kappa(\varrho-1)= \kappa \sum_{l=1}^\infty p_l l={\tt s}$ (recall \eqref{eq:def-s}), the ``many-to-one'' formula \eqref{Feynman-Kac} takes the form
\begin{align}\label{many-to-one}
    \E_{(x, {\sigma})}\left[\sum_{\alpha \in I_t} {\tt f_\mu}(M_t^\alpha )+ \sum_{\beta \in J_t} {\tt g_\mu}(M_t^\beta)\right]=\E_{(x, \sigma)}\left[e^{\int_0^t {\tt s}\ind_I(r) \, \ddd r} \big({\tt f_\mu}(B_t) \ind_I(t)+ {\tt g_\mu}(B_t) \ind_J(t)\big)\right].
\end{align}

\begin{proof}[Proof of Proposition \ref{prop:true_mart}]

Let $\mu<0$ and define 
\begin{equation}\label{eq-xi}
\xi_t:=\exp\left({\int_0^t {\tt s}\ind_I(r) \, \ddd r}\right) \left({\tt f}_\mu(B_t) \ind_I(t)+ {\tt g}_\mu(B_t) \ind_J(t)\right),\qquad t\ge0
\end{equation}
as the integrand on the RHS of \eqref{many-to-one}. 
Applying It\^{o}'s formula to the process $(\xi_t)_{t\ge0}$ between successive jump (switching) times and compensating the jumps, we see that the process
\begin{align*}
\zeta_t:=\xi_t-\xi_0&- \int_0^t e^{\int_0^r {\tt s}\ind_I(\nu)\, \ddd \nu } \left[\left(\frac{1}{2}  {\tt f}''_\mu (B_r )+ c \left({\tt g}_\mu (B_r )-{\tt f}_\mu (B_r )\right)+{\tt s} {\tt f}_\mu(B_r) \right) \ind_I(r)
    \right.\\
    &\qquad\qquad\qquad\qquad + c' \left({\tt f}_\mu (B_r )-{\tt g}_\mu (B_r )\right)\ind_J(r)  \Bigg ]\,\ddd r,\qquad t\ge0
\end{align*}
is a local martingale null at zero.
But again recalling the eigenvalue problem \eqref{eq:eigenvalue_problem}, since $-\mu \lambda_\mu$ is the Perron-Frobenius eigenvalue of $\frac{1}{2}\mu^2 A +Q +R$ we have that
\begin{align*}
   \zeta_t
    &= \xi_t-\xi_0+ \mu\lambda_\mu\int_0^te^{\int_0^r {\tt s}\ind_I(\nu) \, \ddd \nu} \left({\tt f_\mu}(B_r) \ind_I(r)+ {\tt g_\mu}(B_r) \ind_J(r)\right)\,\ddd r \\
    &= \xi_t-\xi_0+ \mu\lambda_\mu\int_0^t\xi_r\,\ddd r .
    \end{align*}
We will show below that 
\begin{equation}\label{moment-bound-xi}
\E_{(x,\sigma)}\left[\sup_{0\le r\le t}\xi_r\right]
<\infty\end{equation}
for all $t\ge0$ and $(x,\sigma)\in S$.
Together with the local martingale property of $(\zeta_t)_{t\ge0}$, 
this implies by a simple localization argument 
that $(\zeta_t)_{t\ge0}$ is a true martingale and thus
\begin{align*}
   &\E_{(x, \sigma)}\left[\xi_t\right]=\E_{(x, \sigma)}\left[\xi_0\right]- \mu\lambda_\mu\int_0^t \E_{(x, \sigma)}\left[\xi_r\right]\,\ddd r ,\qquad t\ge0.
\end{align*}
Hence we have
\[  \E_{(x, \sigma)}\left[\xi_t\right]=\E_{(x, \sigma)}\left[\xi_0\right]e^{-t\mu\lambda_\mu}, 
\qquad t\ge0,\]
and thus the function $h_\mu$ of \eqref{eq-eigenfunction} is indeed an eigenfunction of the expectation semigroup $(\mathcal{M}_t)_{t\ge0}$ with eigenvalue $e^{-t\mu\lambda_\mu}$.
Now the same calculation as in \cite[ p.\ 216, eq.\ (3.18)-(3.20)]{Wat67}, using the Markov property and the branching property of the on/off BBM, shows the martingale property for the process $(X_t^{\lambda_\mu})_{t\ge0}$ defined in \eqref{eq:additive-martingale}.

 In order to establish \eqref{moment-bound-xi}, we observe that $\xi_t\le e^{t{\tt s}}d_1(\mu)e^{\mu B_t}=:C_{t,\mu}e^{\mu B_t}$
and thus
\[\E_{(x,\sigma)}\left[\sup_{0\le r\le t}\xi_r\right]\le C_{t,\mu}\,\E_{(x,\sigma)}\left[\sup_{0\le r\le t}\exp\left(\mu B_r\right)\right]=C_{t,\mu}e^{\mu x}\,\E_{(0,\sigma)}\left[\exp\left(\mu\inf_{0\le r\le t} B_r\right)\right].\]
On the other hand, we can couple the on/off Brownian motion $(B_t,\sigma_t)_{t\ge0}$ to a \emph{standard} Brownian motion $(\tilde B_t)_{t\ge0}$ such that almost surely
\[\inf_{r\in[0,t]} B_r\ge \inf_{r\in[0,t]}\tilde B_r,\qquad t\ge0,\]
from which the assertion easily follows.

Finally, for the square-integrability of $X_t^{\lambda_\mu}$ 
we observe that by \cite[eq.\ (4.97)]{INW69} (see also p.\ 146) we have
\[    \E_{(x, {\sigma})}\left[\left(\sum_{\alpha \in I_t} {\tt f_\mu}(M_t^\alpha )+ \sum_{\beta \in J_t} {\tt g_\mu}(M_t^\beta)\right)^2\right]= \mathcal{M}_t(h_\mu^2)(x,\sigma)+\int_0^t\mathcal{M}_{t-s}\left(k(\cdot)\mathcal{M}_sh_\mu(\cdot)^2\right)(x,\sigma)\,ds,
\]
which is easily seen to be finite for each $t\ge0$. 
Thus $X_t^{\lambda_\mu}\in L^2$. 

As a nonnegative martingale, $(X_t^{\lambda_\mu})_{t\ge0}$ has an almost sure limit $X^{\lambda_\mu}\in L^1$, whence the proof of Proposition \ref{prop:true_mart} is now finished. 
\end{proof}

\subsection{$L^1$-convergence of $(X_t^{\lambda_\mu})_{t\ge0}$ on $]\mu^*, 0[$}

\noindent In this section, we establish the $L^1$-convergence of the additive martingale $(X_t^{\lambda_\mu})_{t\ge0}$ from \eqref{eq:additive-martingale}
(which we already know to converge almost surely) 
and show that the limit $X^{\lambda_\mu}$ is almost surely positive
whenever $\mu^*<\mu < 0$. 
We will need the following lemma, which is due to \cite[p.\ 229]{N88}.

\begin{lemma}[Neveu]\label{lemma:Nevue}
Let $p\in\ ]1,2]$, $n\in \N$ and $X_1,\ldots, X_n \in L^p$ be a collection of nonnegative independent random variables. Then we have for any $c_1,\dots,c_n \geq 0$ 
\begin{align*}
    \E\left[\left(\sum_{k=1}^n c_kX_k\right)^p \right]- \E\left[\sum_{k=1}^n c_kX_k \right]^p\leq \sum_{k=1}^n c_k^p\left( \E\left[X_k^p\right]-\E\left[X_k\right]^p \right).
\end{align*}
\end{lemma}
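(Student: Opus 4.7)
The plan is to establish the inequality via induction on $n$, the crux being a two-variable statement that exploits independence together with separate (but not joint) concavity of an auxiliary function. First, I would absorb the constants into the variables by setting $Y_k := c_k X_k$; these remain independent and non-negative, and since $\E[Y_k^p] = c_k^p\,\E[X_k^p]$ and $\E[Y_k]^p = c_k^p\,\E[X_k]^p$, the claim reduces to showing
$$
\E\Big[\Big(\sum_{k=1}^n Y_k\Big)^p\Big] - \E\Big[\sum_{k=1}^n Y_k\Big]^p \;\leq\; \sum_{k=1}^n \big(\E[Y_k^p] - \E[Y_k]^p\big).
$$
The base case $n=1$ is a trivial equality, so by a straightforward induction on $n$ (splitting $\sum_{k=1}^n Y_k = A + B$ with $A := \sum_{k=1}^{n-1} Y_k$ and $B := Y_n$, which are independent) it suffices to prove the two-variable bound
$$
\E[(A+B)^p] - \E[A+B]^p \;\leq\; \big(\E[A^p] - \E[A]^p\big) + \big(\E[B^p] - \E[B]^p\big)
$$
for independent non-negative $A, B \in L^p$. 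Rearranging, this is equivalent to $\E[\psi(A,B)] \leq \psi(\E[A], \E[B])$, where $\psi(x,y) := (x+y)^p - x^p - y^p$.

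The key observation is that while $\psi$ is \emph{not} jointly concave (at $p=2$ it is the saddle $2xy$), it \emph{is} separately concave in each variable for $p \in (1,2]$: a direct computation yields
$$
\partial_{xx}\psi(x,y) = p(p-1)\big[(x+y)^{p-2} - x^{p-2}\big] \leq 0,
$$
because $t \mapsto t^{p-2}$ is non-increasing on $(0,\infty)$ whenever $p \leq 2$, and by symmetry $\partial_{yy}\psi \leq 0$ as well. With this in hand, independence of $A$ and $B$ allows two successive applications of Jensen's inequality: conditioning on $B$, concavity of $x \mapsto \psi(x, B)$ gives $\E[\psi(A,B) \mid B] \leq \psi(\E[A], B)$; taking expectations and then applying Jensen once more to the concave map $y \mapsto \psi(\E[A], y)$ delivers $\E[\psi(A,B)] \leq \psi(\E[A], \E[B])$, as required.

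The main obstacle, and what makes the lemma non-trivial, is precisely this failure of joint concavity of $\psi$: one cannot dispense with the independence hypothesis by invoking multivariate Jensen on the vector $(A,B)$. Independence is genuinely needed in the conditioning step, mirroring the classical fact that variances of independent random variables add (and indeed for $p=2$ the inequality becomes the additivity of variance, with equality). A minor technicality that I would address is the behaviour of $\partial_{xx}\psi$ as $x \to 0^+$ when $p<2$, where the formula features $x^{p-2} \to \infty$; but concavity of $\psi(\cdot, y)$ on the closed half-line follows by continuity from its concavity on $(0, \infty)$, so Jensen's inequality remains applicable throughout.
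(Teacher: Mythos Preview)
Your argument is correct. The reduction to the two-variable case via induction, the rewriting as $\E[\psi(A,B)]\le\psi(\E A,\E B)$ with $\psi(x,y)=(x+y)^p-x^p-y^p$, and the iterated use of Jensen via separate concavity (exploiting independence to replace $\E[A\mid B]$ by $\E A$) are all valid; the boundary issue at $x=0$ is handled as you indicate, by continuity.

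Regarding comparison with the paper: the paper does not actually present a proof of this lemma but simply refers the reader to Neveu's original article \cite[p.~229]{N88}. Your write-up is a clean, self-contained version of precisely the argument found there, so in substance you have reconstructed the cited proof rather than found an alternative route.
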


\begin{thm} \label{thm:convergence}
Let $\mu^*< \mu < 0$. Then, for any initial condition of the underlying on/off BBM, the corresponding additive martingale $(X^{\lambda_\mu}_t)_{t \geq 0}$ converges to $X^{\lambda_\mu}$ almost surely and in $L^1$ as $t\to\infty$.
\end{thm}

\begin{proof}
The proof follows along the lines of \cite[p. 229]{N88} and \cite[Theorem 1.39]{C97}. Note that by the branching property of the on/off BBM, 
we have a decomposition (see e.g. \cite[eq. (3.2)]{N88}) 
\begin{align} \label{eq:branchingprop}
    X^{\lambda_\mu}_{s+t} = \sum_{\alpha \in I_s} \exp(\mu (M_s^\alpha +\lambda_\mu s))Z^\alpha_t(s) +\sum_{\beta \in J_s} \exp(\mu (M_s^\beta +\lambda_\mu s))Z^\beta_t(s),\qquad  s,t>0,
\end{align}
where conditionally on $\mathcal{F}_s$, the processes $(Z^\alpha_t(s))_{t\ge0}$ for $\alpha \in I_s$ are independent versions of $(X_t^{\lambda_\mu})_{t\ge0}$ started from an active particle at $0$, which are also independent of $\mathcal{F}_s$. The processes $(Z^\beta_t(s))_{t\ge0}$ are defined analogously but started from a dormant particle at $0$. 
 Next, we apply Neveu's Lemma \ref{lemma:Nevue} 
conditionally on $\mathcal{F}_s$ to obtain, for any $p \in\, ]1,2]$, 
\begin{align*}
    \E\left[ (X^{\lambda_\mu}_{t+s})^p\vert \mathcal{F}_s\right] -(X^{\lambda_\mu}_s)^p &\leq  \sum_{\alpha \in I_s} \exp\left(\mu p (M_s^\alpha +\lambda_\mu s)\right)\left(\E\left[(Z_t^\alpha(s))^p\right] - \left(\E\left[Z_t^\alpha(s)\right]\right)^p\right)\\
    &\qquad +\sum_{\beta \in J_s} \exp\left(\mu p (M_s^\beta +\lambda_\mu s)\right)\left(\E\left[(Z_t^\beta(s))^p\right]- \left(\E\left[Z_t^\beta(s)\right]\right)^p\right)\\
    &
    \le C_{t,p,\mu} \sum_{\alpha \in K_s} \exp(\mu p (M_s^\alpha +\lambda_\mu s))
\end{align*}
for some finite constant $C_{t,p,\mu}$, where we use that $X_t^{\lambda_\mu}\in L^2$ by Proposition \ref{prop:true_mart}.
Thus by taking expectations, we get
\begin{align*}
    \E_{(x,\sigma)}\left[ (X^{\lambda_\mu}_{t+s})^p\right] - \E_{(x,\sigma)}\left[(X^{\lambda_\mu}_s)^p\right] &\leq C_{t,p,\mu}\, \E_{(x,\sigma)}\left[\sum_{\alpha \in K_s} \exp(\mu p (M_s^\alpha +\lambda_\mu s))\right]
\end{align*}
for all $(x,\sigma)\in\R\times\{\boldsymbol{a},\boldsymbol{d}\}$.
Since Proposition \ref{prop:speed_function} implies that the mapping $\mu \mapsto \lambda_\mu$ is strictly increasing on $[\mu^*,0[$, we can find some $p\in\, ]1,2]$ such that $\lambda_\mu >\lambda_{\mu p}$. But then 
\begin{align*}
\E_{(x,\sigma)}\left[\sum_{\alpha \in K_s} \exp(\mu p (M_s^\alpha +\lambda_\mu s))\right] &=  \exp\left(\mu p (\lambda_\mu-\lambda_{\mu p})s\right) \,\E_{(x,\sigma)}\left[\sum_{\alpha \in K_s} \exp(\mu p (M_s^\alpha +\lambda_{\mu p} s))\right]\\
    &\leq\exp\left(\mu p (\lambda_\mu-\lambda_{\mu p})s\right) \frac{1}{ d_2(\mu)}\, \E_{(x,\sigma)}\left[X^{\lambda_{\mu p}}_s\right]\\
    &\leq \tilde C_{p,\mu} \exp\left(\mu p (\lambda_\mu-\lambda_{\mu p})s\right),
\end{align*}
since $(X_t^{\lambda_{\mu p}})_{t\ge0}$ is a martingale (also recall \eqref{eq:eigenvector}). This implies that for every $n \in \N$
\begin{align*}
   \E_{(x,\sigma)}\left[(X^{\lambda_\mu}_{n})^p\right]-\E_{(x,\sigma)}\left[(X^{\lambda_\mu}_{0})^p\right] &= \sum_{k=0}^{n-1} \E_{(x,\sigma)}\left[(X^{\lambda_\mu}_{k+1})^p-(X^{\lambda_\mu}_{k})^p\right] \\
   &\leq \widehat C_{1,p,\mu} \sum_{k \in \N} \exp(\mu p (\lambda_\mu-\lambda_{\mu p}) k)< \infty,
\end{align*}
since $\mu< 0$ and $\lambda_\mu>\lambda_{\mu p}$. Hence, $(X^{\lambda_\mu}_n)_{n \in \N}$ is bounded in $L^p$. But since $\big((X^{\lambda_\mu}_t)^p\big)_{t\geq 0}$ is a submartingale, we get $L^p$-boundedness for $(X^{\lambda_\mu}_t)_{t\geq 0}$. The martingale convergence theorem then gives the desired result.
\end{proof}

\noindent The next step is to show that the limiting random variable $X^{\lambda_\mu}$ 
is strictly positive almost surely.

\begin{prop}\label{prop:almost-surely-positive}
Let $\mu^*< \mu < 0$. Then 
we have 
$$\PP_{(x,\sigma)}(X^{\lambda_\mu}>0)=1$$
for all $(x,\sigma)\in\R\times\{\boldsymbol{a},\boldsymbol{d}\}$.
\end{prop}
\begin{proof}
We will see below that 
\begin{equation}\label{eq:0-1}
\PP_{(x,\sigma)}(X^{\lambda_\mu}>0)=1 \quad \mbox{ or } \quad \PP_{(x,\sigma)}(X^{\lambda_\mu}=0)=1.
\end{equation}
Then the assertion follows from the $L^1$-convergence of $(X_t^{\lambda_\mu})_{t\ge0}$ and the fact that $\E_{(x,\sigma)}[X^{\lambda_\mu}_0]>0$.

The proof of \eqref{eq:0-1} uses standard arguments. 
Note that by the branching property we have 
\begin{align}\label{eq:branchingprop2}
    X^{\lambda_\mu}= \sum_{\alpha \in K_s} \exp(\mu(M^\alpha_s+\lambda_\mu s)) Z^{\alpha}(s),\qquad s>0,
\end{align}
where conditionally on $\mathcal{F}_s$, the random variables $Z^{\alpha}(s)$ for $\alpha \in K_s$ are independent copies of 
 $X^{\lambda_\mu}$ and the underlying on/off BBM is started from one particle at $0$ in the state of $M^\alpha_s$, which are moreover independent of $\mathcal{F}_s$.
Indeed, this follows by taking $t\to\infty$ in \eqref{eq:branchingprop}. 

Now let $\sigma\in\{\boldsymbol{a},\boldsymbol{d}\}$ and observe that $X^{\lambda_\mu}$ under $\PP_{(x,\sigma)}$ has the same distribution as $X^{\lambda_\mu}e^{\mu x}$ under $\PP_{(0,\sigma)}$. Hence, the probability
$$\PP_{(x,\sigma)}(X^{\lambda_\mu}=0)=\PP_{(0,\sigma)}(X^{\lambda_\mu}e^{\mu x}=0)=\PP_{(0,\sigma)}(X^{\lambda_\mu}=0)=:p$$
is independent of $x\in\R$, and we omit the subscript for the initial condition of the underlying on/off BBM in the rest of this proof.
By \eqref{eq:branchingprop2}, we have for each $s>0$
\begin{align*}
    p&= \PP(X^{\lambda_\mu} =0)\\
    &= \PP\left(\forall \alpha \in K_s \colon Z^{\alpha}(s)=0 \right)\\
    &= \E\left[ \prod_{\alpha\in K_s}\PP\left( Z^{\alpha}(s)=0\big| \mathcal{F}_s\right)\right]\\
    &= \E[p^{\abs{K_s}}].
\end{align*}

We now claim that $(p^{\abs{K_t}})_{t \geq 0}$ is an $(\mathcal{F}_t)_{t\ge0}$-martingale. Using the Markov and branching properties and our preceding calculation, we infer that, almost surely,
\begin{align*}
    \E\left[ p^{\abs{K_t}} \big\vert \mathcal{F}_s \right] 
    &= \E_{M_s}\left[ p^{\abs{K_{t-s}}} \right]\\
    &= \prod_{\alpha\in K_s} \E\left[p^{\abs{K_{t-s}}}\right]\\
    &= p^{\abs{K_s}}
\end{align*}
for $t>s$.
Now assuming $0<p<1$, we would have $p^{\abs{K_t}}\to0$ (since $|K_t|\to\infty$) almost surely. 
But since $(p^{\abs{K_t}})_{t\geq 0}$ is bounded, the martingale convergence theorem yields also $L^1$-convergence, thus leading to a contradiction.
\end{proof}

\subsection{Almost sure convergence of $(X_t^{\lambda_\mu})_{t\ge0}$ to $0$ on $]-\infty, \mu^*[$}

In this section, we show that the additive martingale  $(X^{\lambda_\mu}_t)_{t \geq 0}$ converges almost surely to zero if $\mu < \mu^\ast < 0$. 
We suitably adapt the reasoning of \cite[Lemma 4.11]{C97}
and begin by examining the diagonal entries of the matrix $\frac{1}{2}\mu^2A+Q+R+\mu \lambda I_2$, 
cf.\ the eigenvalue problem \eqref{eq:eigenvalue_problem}.
For $\mu,\lambda\in\R$, define
 \[F_{\boldsymbol{a}}(\mu,\lambda):=\tfrac{1}{2}\mu^2-c+{\tt s}+\mu\lambda\qquad\text{and}\qquad F_{\boldsymbol{d}}(\mu,\lambda):=\mu\lambda-c',\]
so that we have 
 $$\tfrac{1}{2}\mu^2A+Q+R+\mu \lambda I_2 = \begin{pmatrix}F_{\boldsymbol{a}}(\mu,\lambda) & c \\ c' & F_{\boldsymbol{d}}(\mu,\lambda)\end{pmatrix}.$$

\begin{lemma}[Negativity Lemma]\label{negativity-lemma}
For each fixed $\mu<0$ and $\sigma \in \{\boldsymbol{a},\boldsymbol{d}\}$, we have $F_\sigma(\cdot,\lambda_\mu)<0$ in a suitable neighborhood of $\mu$. 
\end{lemma}
\begin{proof}
Let $\mu<0$. By Lemma \ref{lem:speedfunctionpositivity}, there exists a strictly positive eigenvector $\vec{d}(\mu)$ of $\frac{1}{2}\mu^2A+Q+R$ with corresponding eigenvalue $-\mu \lambda_\mu$.
    The assertion then follows from $\left(\frac{1}{2}\mu^2A+Q+R+\mu \lambda _\mu I_2\right) \vec{d}(\mu) = \vec 0$ together with the model assumption $c, c' > 0$ and the continuity of $F_\sigma(\cdot,\lambda_\mu)$.
\end{proof}

\noindent Next, we provide an upper bound on the expectation of the limiting random variable $X^{\lambda_\mu}$ 
of the additive martingale from Proposition \ref{prop:true_mart}. For $p\in\ ]0,1[$, define $\vec{\theta}(p):=( \theta_{\boldsymbol{a}}(p), \theta_{\boldsymbol{d}}(p))^T$ with
    \begin{align*}
        \theta_{\boldsymbol{a}}(p) := \mathbb{E}_{(0,\boldsymbol{a})}[(X^{\lambda_\mu})^p] \qquad\text{and}\qquad
        \theta_{\boldsymbol{d}}(p) := \mathbb{E}_{(0,\boldsymbol{d})}[(X^{\lambda_\mu})^p] .
    \end{align*}
Further, recall that we denote by $\kappa$ the branching rate of the on/off BBM, by $\varrho=\sum_{k=2}^\infty kp_{k-1}$ the reproduction mean of the corresponding offspring distribution, and that we have ${\tt s}=\kappa(\varrho-1)$, see \eqref{eq:def-s}.

\begin{lemma}\label{lemma-expectation}
    Let $\mu < \mu^\ast < 0$. Then for all $p \in\, ]0,1[$ close enough to 1 we have
    \begin{align*}
        \theta_{\boldsymbol{a}}(p)         \leq \frac{c\theta_{\boldsymbol{d}}(p)+(\kappa+{\tt s})\theta_{\boldsymbol{a}}(p)}{-F_{\boldsymbol{a}}(\mu p,\lambda_\mu)+\kappa+{\tt s}}
\qquad\text{and}\qquad
        \theta_{\boldsymbol{d}}(p)        \leq \frac{c'\theta_{\boldsymbol{a}}(p)}{-F_{\boldsymbol{d}}(\mu p,\lambda_\mu)}.
    \end{align*}
    \end{lemma}
    \begin{proof}
Let $T_1$ denote the first switching time and $T_2$ the first branching time of the on/off BBM, started from a single (active or dormant) particle. 
Then by the branching property, we have a decomposition 
\begin{align*}
    X^{\lambda_\mu} &=  X^{\lambda_\mu} \ind_{\lbrace T_1 < T_2 \rbrace} + X^{\lambda_\mu} \ind_{\lbrace T_2 < T_1 \rbrace}\sum_{k=2}^\infty\ind_{\lbrace |K_{T_2}|=k  \rbrace}\\
    &= e^{\mu(M^1_{T_1}+\lambda_\mu T_1)} Z^{\mu,0}\ind_{\lbrace T_1 < T_2 \rbrace} + \ind_{\lbrace T_2 < T_1 \rbrace}\sum_{k=2}^\infty\ind_{\lbrace |K_{T_2}|=k \rbrace}\sum_{\alpha \in K_{T_2}} e^{\mu(M^1_{T_2}+\lambda_\mu T_2)} Z^{\mu,\alpha}\qquad \text{a.s.},
\end{align*}
where the random variables $Z^{\mu, \alpha}$ for $\alpha \in K_{T_2}$ are (conditionally on $\mathcal{F}_{T_2}$) independent copies of $X^{\lambda_\mu}$ 
with the underlying on/off BBM started from one particle at $0$ in the state of $M^\alpha_{T_2}$, which are moreover independent of $\mathcal{F}_{T_2}$. Similarly,  $Z^{\mu,0}$ is a copy of $X^{\lambda_\mu}$ started from one particle at $0$ in the state of $M^1_{T_1}$, independent of $\mathcal{F}_{T_1}$.
Indeed, this follows by taking $s=T_1$ resp.\ $s=T_2$ in \eqref{eq:branchingprop2}.
From this decomposition we derive, using that $(z_1+z_2)^p\le z_1^p+z_2^p$ for all $z_1,z_2\ge0$ since $0<p<1$, that 
        \begin{align*}
            \mathbb{E}_{(0,\boldsymbol{a})}\left[(X^{\lambda_\mu})^p\right] &\leq \mathbb{E}_{(0,\boldsymbol{a})}\left[e^{\mu p(M^1_{T_1} + \lambda_\mu T_1)}\ind_{\lbrace T_1 < T_2\rbrace } \theta_{\boldsymbol{d}}(p)\right] + \varrho\,\mathbb{E}_{(0,\boldsymbol{a})}\left[e^{\mu p(M^1_{T_2} + \lambda_\mu T_2)}\ind_{\lbrace T_2<T_1\rbrace}\theta_{\boldsymbol{a}}(p)\right].
        \end{align*}
Since (starting from a single active particle) $T_1$ resp.\ $T_2$ are independent exponential random variables with parameters $c$ resp.\ $\kappa$, evaluating the above expectations gives
        \begin{align*}
\theta_{\boldsymbol{a}}(p) &\leq -\frac{c\theta_{\boldsymbol{d}}(p)+\kappa\varrho \theta_{\boldsymbol{a}}(p)}{\frac{1}{2}(\mu p)^2+\mu p\lambda_\mu -\kappa -c} = \frac{c\theta_{\boldsymbol{d}}(p)+ (\kappa+{\tt s}) \theta_{\boldsymbol{a}}(p)}{-F_a(\mu p,\lambda_\mu)+\kappa+{\tt s}}.
        \end{align*}
Starting from a dormant particle at $0$, the argument is analogous but simpler since then the initial particle 
first has to wake up before it can branch or move. In particular, we have $T_1 < T_2$ and $M^1_{T_1}=0 $ almost surely and therefore get 
        \begin{equation*}
\mathbb{E}_{(0,\boldsymbol{d})}\left[(X^{\lambda_\mu})^p\right] 
= \mathbb{E}_{(0,\boldsymbol{d})}\left[e^{\mu p \lambda_\mu T_1}\theta_{\boldsymbol{a}}(p)\right] = \frac{c'}{c' - \mu p \lambda_\mu}\theta_{\boldsymbol{a}}(p) = \frac{c' \theta_{\boldsymbol{a}}(p)}{-F_{\boldsymbol{d}}(\mu p,\lambda_\mu)}.
        \end{equation*}
    \end{proof}

\begin{prop}
	\label{prop:zero}
    For $\mu < \mu^\ast < 0$ and $p \in\ ]0,1[$  
    close enough to $1$ it holds
    \begin{align*}
        \mathbb{E}_{(0,\boldsymbol{a})}\left[(X^{\lambda_\mu})^p\right] = 0 \quad \mbox{ and } \quad  \mathbb{E}_{(0,\boldsymbol{d})}\left[(X^{\lambda_\mu})^p\right] = 0. 
    \end{align*}
    In particular $X^{\lambda_\mu} = 0$ almost surely.
\end{prop}

\begin{proof}
    Since the additive martingale is nonnegative and converges almost surely, we have 
$ \theta_{\boldsymbol{a}}(p), \theta_{\boldsymbol{d}}(p)\ge0$.

 Due to the Negativity Lemma \ref{negativity-lemma}, we have 
$-F_{\boldsymbol{a}}(p\mu,\lambda_\mu)+\kappa+{\tt s} > 0$ and $-F_{\boldsymbol{d}}(p\mu,\lambda_\mu) > 0$
  for $p$ close enough to 1. 
Thus we can rewrite the inequalities from Lemma \ref{lemma-expectation} to obtain
\begin{equation}\label{estimate-nonnegative}
    \begin{pmatrix}
        0\\0
    \end{pmatrix} \leq \begin{pmatrix}
        F_{\boldsymbol{a}}(p\mu,\lambda_\mu) & c \\
        c' & F_{\boldsymbol{d}}(p\mu,\lambda_\mu)
    \end{pmatrix} \begin{pmatrix}
        \theta_{\boldsymbol{a}}(p) \\ \theta_{\boldsymbol{d}}(p)
    \end{pmatrix} = \left(\tfrac{1}{2}(\mu p)^2A+Q+R+\mu p \lambda_\mu I_2 \right)\vec{\theta}(p).
\end{equation}

\noindent 
Inverting the matrix $\frac{1}{2}(\mu p)^2A+Q+R+\mu p \lambda_\mu I_2$, 
we get

\begin{equation*}
    \left(\tfrac{1}{2}(\mu p)^2A+Q+R+\mu p \lambda_\mu I_2\right)^{-1} = \frac{1}{
    \det\left(\tfrac{1}{2}(\mu p)^2A+Q+R+\mu p \lambda_\mu I_2\right)}
    \begin{pmatrix}
        F_{\boldsymbol{d}}(\mu p,\lambda_\mu) & -c \\
        -c' & F_{\boldsymbol{a}}(\mu p,\lambda_\mu)
    \end{pmatrix}.
\end{equation*}
Now write $P(\mu p,\lambda_\mu)$ for the determinant in the above expression. Then $P(\cdot,\lambda_\mu)$ is a polynomial of degree $3$ with a positive leading coefficient and three distinct zeroes, the smallest of which is $\mu$.\footnote{Since $\mu^*$ is the only local minimum of the speed function and $\mu<\mu^*$, we have that $\mu_1:=\mu$ must be the smallest zero of $P(\cdot,\lambda_\mu)$. Since $\lim_{\nu\to0-}\lambda_\nu=\infty$ (see \eqref{eq:asymptotics-speed-function}), there is exactly one other value $\mu^*<\mu_2<0$ with $\lambda_{\mu_2}=\lambda_\mu$, corresponding to the second zero. The third zero is obtained as the unique $\mu_3>0$ with $\lambda^-_{\mu_3}=\lambda_\mu$. This value must exist since $\lim_{\nu \to 0+} \lambda^-_\nu =\infty$ and $\lim_{\nu \to \infty} \lambda^-_\nu =0$.
}
Consequently, if $p \in\, ]0,1[$ is close enough to $1$, then $P(\mu p,\lambda_\mu)$ is positive and the above inverse is a matrix with strictly negative entries.
Together with \eqref{estimate-nonnegative}, this implies 
 $$\vec{\theta}(p)=\left(\tfrac{1}{2}(\mu p)^2A+Q+R+\mu p \lambda_\mu I_2\right)^{-1}\left(\tfrac{1}{2}(\mu p)^2A+Q+R+\mu p \lambda_\mu I_2\right)\vec{\theta}(p) \leq \vec{0}$$ 
and thus $\vec{\theta}(p)=\vec{0}$.
\end{proof}

\subsection{The speed of the rightmost particle of on/off BBM } 
\label{subsec:Speed}

In this section, we derive the asymptotic speed of the rightmost particle of an on/off branching Brownian motion, thereby providing a proof of Theorem \ref{thm:rightmost} (for model variant I).
We define the position of the rightmost and the leftmost particle in the on/off BBM $M_t$  at time $t\ge0$ 
by 
$$
R_t:=\max_{\alpha \in K_t} M_t^\alpha \quad \mbox{ and } \quad L_t:=\min_{\alpha \in K_t} M_t^\alpha.$$
Of course, if we start from a single active resp.\ dormant particle at the origin, i.e.\ $M_0=(0, \boldsymbol{a})$ resp.\ $M_0=(0, \boldsymbol{d})$, then by symmetry $R_t$ and $-L_t$ are equal in law.

We first provide an upper bound on the asymptotic speed of propagation of $R_t$ based on the extinction of the additive martingale $(X_t^{\lambda_\mu})_{t\ge0}$ in the regime $\mu < \mu^\ast < 0$.

\begin{prop}\label{prop:upperbound}
For all $(x,\sigma)\in \R\times\{\boldsymbol{a},\boldsymbol{d}\}$, we have 
\begin{equation*}
    \limsup_{t \to \infty} \frac{R_t}{t} \leq \lambda^* \quad \PP_{(x,\sigma)}\text{-a.s.}
\end{equation*}    
\end{prop}

\begin{proof}
Since for all $x\in\R$
$$ \PP_{(x,\sigma)}\left(\limsup_{t \to \infty} \frac{R_t}{t} \leq \lambda^* \right)= \PP_{(0,\sigma)}\left(\limsup_{t \to \infty} \frac{R_t+x}{t} \leq \lambda^* \right)= \PP_{(0,\sigma)}\left(\limsup_{t \to \infty} \frac{R_t}{t} \leq \lambda^* \right),$$
it clearly suffices to consider $x=0$.
Given $\epsilon > 0$, let $\lambda := \lambda^* + \epsilon$ and
choose $\mu < \mu^\ast < 0$ such that the speed function $\lambda_{\bullet}$ takes the value $\lambda$ at $\mu$, i.e.\ $\lambda=\lambda_\mu$. 
By Propositions \ref{prop:true_mart} and \ref{prop:zero} we have $X_t^{\lambda_\mu} \to 0$ almost surely as $t\to\infty$. 
    Further, by \eqref{eq:f,g-specific} and the definition of $(X_t^{\lambda_\mu})_{t\ge0}$ in \eqref{eq:additive-martingale} we get
    \begin{equation*}
        X_t^{\lambda_\mu} \geq 
        d_2(\mu)\,e^{\mu(L_t + \lambda_\mu t)} > 0 \quad \text{a.s.},
    \end{equation*}
where we also recall \eqref{eq:eigenvector}. 
Since $\mu < 0$, 
 combining both observations implies that  
    $L_t + \lambda_\mu t \to +\infty$ almost surely as $t \to \infty$. In particular, almost surely there is an $N \geq 0$ such that for all $t > N$ we have $L_t + \lambda_\mu t > 0$. From this, by symmetry, we infer that 
    \begin{equation*}
        \lambda^* + \epsilon = \lambda_\mu \geq \limsup_{t \to \infty} \frac{R_t}{t} \quad \text{a.s.}
    \end{equation*}
    Since $\epsilon > 0$ was arbitrary, the assertion follows.
\end{proof}

\noindent Next, we use the almost sure and $L^1$-convergence of $(X_t^{\lambda_\mu})_{t\ge0}$ to a positive random variable  in the survival  regime $\mu^\ast < \mu < 0$ in order to obtain the corresponding lower bound.

\begin{prop}
	\label{prop:lowerbound}
For all $(x,\sigma)\in \R\times\{\boldsymbol{a},\boldsymbol{d}\}$, we have
\begin{align*}
    \liminf_{t \to \infty} \frac{R_t}{t} \geq \lambda^* \quad\PP_{(x,\sigma)}\text{-a.s.}
\end{align*}
\end{prop}

\begin{proof}
This follows along the lines of \cite[Proof of Theorem 1.44]{C97}. 
Again it suffices to consider $x=0$. Moreover, we suppose that $\sigma=\boldsymbol{a}$; the argument for $\sigma=\boldsymbol{d}$ is analogous.
Fix $\mu^*<\mu<0$. Then we know
by Theorem \ref{thm:convergence} and Proposition \ref{prop:almost-surely-positive} that $X^{\lambda_\mu}$ is an almost surely positive random variable with expectation $\E_{(0,\boldsymbol{a})}[X^{\lambda_\mu}]=\E_{(0,\boldsymbol{a})}[X_0^{\lambda_\mu}]=1$ (again recall \eqref{eq:eigenvector}). Thus,
  we may define an equivalent probability measure by 
 $$
 \ddd \mathbb{Q}^\mu := X^{\lambda_\mu} \ddd \PP_{(0,\boldsymbol{a})}
 $$
and note that by the $L^1$-convergence $X_t^{\lambda_\mu}\to X^{\lambda_\mu}$, we have $ \mathbb{Q}^\mu\big|_{{\mathcal F_t}}=X_t^{\lambda_\mu} \ddd \PP_{(0,\boldsymbol{a})}$ for each $t\ge0$.
By the definition \eqref{eq:additive-martingale} of the additive martingale, \eqref{eq:f,g-specific} and \eqref{eq:lambda_mu} together with the explicit form of the eigenvector from Lemma \ref{lem:speedfunctionpositivity}, we see that for each $t \ge 0$, $X^{\lambda_\mu}_t$ is infinitely often differentiable in $\mu$. 
 Moreover, by interchanging expectation and differentiation we observe that 
first and second partial derivatives with respect to $\mu$ of $(X^{\lambda_\mu}_t)_{t \geq 0}$ are still $\PP_{(0,\boldsymbol{a})}$-martingales.
 Hence, the processes
 \begin{align*}
     N_t^{(1)} \defeq \frac{1}{X_t^{\lambda_\mu}}  {\partial_\mu X^{\lambda_\mu}_t}  \quad \mbox{ and } \quad
     N_t^{(2)} \defeq \frac{1}{X_t^{\lambda_\mu}}  {\partial^2_\mu X^{\lambda_\mu}_t},\qquad t\ge0  
 \end{align*}
 define $\mathbb{Q}^\mu$-martingales.
  Next, we define for $\alpha \in I_t$ and  $\beta \in J_t$ the strictly positive quantities 
 \begin{align*}
     S^\alpha_t:=\frac{{\tt f}_\mu(M^\alpha_t+\lambda_\mu t)}{X^{\lambda_\mu}_t} \quad \mbox{ and } \quad  S^\beta_t:=\frac{{\tt g}_\mu(M^\alpha_t+\lambda_\mu t)}{X^{\lambda_\mu}_t}
 \end{align*}
 that can be regarded as probability weights since 
 \begin{align}
 	\label{eq:prob_weights}
	\sum_{\gamma \in K_t} S_t^\gamma =\sum_{\alpha \in I_t} S_t^\alpha + \sum_{\beta \in J_t} S^\beta_t= \frac{X_t^{\lambda_\mu}}{X_t^{\lambda_\mu}} =1.
\end{align}
A simple computation, taking partial derivatives with respect to $\mu$, 
yields that $N_t^{(1)}$ can now be expressed as 
 \begin{align*} 
	N_t^{(1)}&= \sum_{\alpha \in I_t } S_t^\alpha \left(\frac{\partial_\mu d_1(\mu)}{ d_1(\mu)} +M_t^\alpha +\partial_\mu (\mu \lambda_\mu) t \right)+\sum_{\beta \in J_t } S_t^\beta \left(\frac{\partial_\mu d_2(\mu)}{ d_2(\mu)} +M_t^\beta +\partial_\mu (\mu \lambda_\mu) t \right).
\end{align*}
This leads to the estimates
 \begin{align} \label{eq:N/t}	
	\frac{N_t^{(1)}}{t}&\geq   \min_{i\in \lbrace 1,2\rbrace}\frac{\partial_\mu d_i(\mu)}{ t d_i(\mu)} +\frac{L_t}{t} +\partial_\mu (\mu \lambda_\mu) 
\end{align}	
and
 \begin{align}\label{eq:upper-bound}
	\left(N_t^{(1)}\right)^2&\leq \sum_{\alpha \in I_t } S_t^\alpha \left(\frac{\partial_\mu d_1(\mu)}{ d_1(\mu)} +M_t^\alpha +\partial_\mu (\mu \lambda_\mu) t \right)^2 +\sum_{\beta \in J_t } S_t^\beta \left(\frac{\partial_\mu d_2(\mu)}{ d_2(\mu)} +M_t^\beta +\partial_\mu (\mu \lambda_\mu) t \right)^2,
\end{align}
where we used \eqref{eq:prob_weights} and Jensen's inequality for the upper bound. 
Moreover, another simple but slightly more tedious computation yields that
 \begin{align*} 
	N_t^{(2)}&= \sum_{\alpha \in I_t } S_t^\alpha\left[ \left(\frac{\partial_\mu d_1(\mu)}{ d_1(\mu)} +M_t^\alpha +\partial_\mu (\mu \lambda_\mu) t \right)^2 + \frac{\partial^2_\mu d_1(\mu)}{ d_1(\mu)} - \left(\frac{\partial_\mu d_1(\mu)}{ d_1(\mu)}\right)^2 + \partial^2_\mu (\mu \lambda_\mu) t\right]\\
	&\quad + \sum_{\beta \in J_t } S_t^\beta\left[ \left(\frac{\partial_\mu d_2(\mu)}{ d_2(\mu)} +M_t^\beta +\partial_\mu (\mu \lambda_\mu) t \right)^2 + \frac{\partial^2_\mu d_2(\mu)}{ d_2(\mu)} - \left(\frac{\partial_\mu d_2(\mu)}{ d_2(\mu)}\right)^2 + \partial^2_\mu (\mu \lambda_\mu) t\right].
\end{align*}
Hence, \eqref{eq:upper-bound} now implies
\begin{align*}
    \E_{\mathbb{Q}^\mu}\left[\left(N_t^{(1)}\right)^2\right] &\leq \E_{\mathbb{Q}^\mu}\left[\sum_{\alpha \in I_t } S_t^\alpha \left(\frac{\partial_\mu d_1(\mu)}{ d_1(\mu)} +M_t^\alpha +\partial_\mu (\mu \lambda_\mu) t \right)^2 +\sum_{\beta \in J_t } S_t^\beta \left(\frac{\partial_\mu d_2(\mu)}{ d_2(\mu)} +M_t^\beta +\partial_\mu (\mu \lambda_\mu) t \right)^2 \right]\\
    &=\E_{\mathbb{Q}^\mu}\left[N_t^{(2)}\right] + \E_{\mathbb{Q}^\mu}\left[\sum_{\alpha \in I_t } S_t^\alpha \left( \left(\frac{\partial_\mu d_1(\mu) }{d_1(\mu)}\right)^2 - \frac{\partial^2_\mu d_1(\mu)}{d_1(\mu)} -\partial_\mu^2 (\mu \lambda_\mu)t \right)\right]\\
    &\qquad + \E_{\mathbb{Q}^\mu}\left[\sum_{\beta \in J_t } S_t^\beta \left( \left(\frac{\partial_\mu d_2(\mu) }{d_2(\mu)}\right)^2 - \frac{\partial^2_\mu d_2(\mu)}{d_2(\mu)} -\partial_\mu^2 (\mu \lambda_\mu)t \right)\right] \\
    &\leq C_1^\mu +C_2^\mu t
\end{align*}
for constants 
\begin{align*}
    C_1^\mu &:=\E_{\mathbb{Q}^\mu}[N_0^{(2)}] + \max_{i\in \lbrace 1,2\rbrace}\left (\left(\frac{\partial_\mu d_i(\mu) }{d_i(\mu)}\right)^2 - \frac{\partial^2_\mu d_i(\mu)}{d_i(\mu)}\right)\geq 0
\end{align*}
and 
\begin{align*}
	C_2^\mu &:= -\partial_\mu^2 (\mu \lambda_\mu) \ge 0,
\end{align*}
where we used again \eqref{eq:prob_weights} and the fact that $\left(N_t^{(2)}\right)_{t\ge0} $ is a martingale under $\mathbb{Q}^\mu$. But then Doob's maximal inequality yields for every $\eps>0$ and $n\in \N$
\begin{align*}
    \mathbb{Q}^\mu \left( \sup_{ t\in[2^{n-1}, 2^n]} \tfrac{\abs{N_t^{(1)}}}{t} \geq \eps\right)&\leq \mathbb{Q}^\mu \left( \sup_{ t\in[0, 2^n]} \abs{N_t^{(1)}} \geq \eps2^{n-1}\right) \leq \eps^{-2} 2 ^{-2(n-1)}\big(C_1^\mu+2^n C_2^\mu \big).
\end{align*}
Since the right hand side is summable in $n\in \N$, we get from the Borel-Cantelli Lemma that $\nicefrac{N_t^{(1)}}{t} \xrightarrow{t\to\infty} 0$ almost surely under $\mathbb{Q}^\mu$ und thus also under $\PP_{(0,\boldsymbol{a})}$. 
Now, the estimate \eqref{eq:N/t} implies 
\begin{align*}
    \limsup_{t \to \infty}\frac{L_t}{t} \leq -\partial_\mu(\mu \lambda_\mu)\quad \text{a.s.},
\end{align*}
which holds for every $\mu \in\ ]\mu^*,0[$. Since the 
right hand side is continuous in $\mu$, we may infer that
\begin{align*}
    \limsup_{t \to \infty}\frac{L_t}{t} \leq -\partial_\mu(\mu \lambda_{\mu})\big|_{\mu =\mu^*} \quad \text{a.s.}
\end{align*}
Now, recall that $\mu^*$ is the unique minimizer of the speed function 
 $\mu \mapsto \lambda_\mu$ on the negative half axis, which yields that
$$\partial_{\mu}( \mu\lambda_{\mu}) \big|_{\mu=\mu^ *}=\lambda^* +\mu\partial_\mu( \lambda_{\mu})\big|_{\mu =\mu^*}=\lambda^*$$
and thus
$$\limsup_{t \to \infty}\frac{L_t}{t} \leq -\lambda^*\quad \text{a.s.}$$
Since $L_t$ is equal in law to $-R_t$, the proof is finished.
\end{proof}

Note that taken together, Propositions \ref{prop:upperbound} and \ref{prop:lowerbound} provide a proof of Theorem \ref{thm:rightmost} for model variant I.
In particular, we have proved that for all $(x,\sigma)\in \R\times\{\boldsymbol{a},\boldsymbol{d}\}$
 \begin{align}
 	\label{eq:Rt}
	\lim_{t \to \infty} \frac{R_t}{t}=\lambda^*\quad\text{and}\quad 	\lim_{t \to \infty} \frac{L_t}{t}=-\lambda^*\qquad \PP_{(x,\sigma)}\text{-a.s.}
\end{align}
This already allows us to infer that our PDE \eqref{eq:FKPPDormancy}, when started from Heaviside initial conditions, 
exhibits an (approximate) travelling wave solution with critical speed $\lambda^*$. 

\begin{proof}[Proof of Theorem \ref{thm:critical_speed}]
For Heaviside initial conditions $u_0=v_0=\ind_{\R^+ }$, 
Proposition \ref{prop:duality} implies that
 \begin{align*}
     u(t,x+\lambda t)=\PP_{(0,\boldsymbol{a})}(R_t\leq x +\lambda t) \quad \mbox{ and } \quad 
     v(t,x+\lambda t)=\PP_{(0,\boldsymbol{d})}(R_t\leq x +\lambda t),
 \end{align*}
from which the result follows in combination with \eqref{eq:Rt}. 
\end{proof}

\section{Multiplicative martingales and travelling wave solutions}
\label{sec:travelling}

\noindent The convergence results in the preceding section contain all the necessary tools for the analysis of travelling wave solutions to our original PDE \eqref{eq:FKPPDormancy}. As before, we focus on model variant I, but emphasize that analogous results can be proved for variant II. 
Again, we will employ martingale methods, this time based on so-called multiplicative instead of additive martingales, 
the use of which was initiated by Neveu \cite{N88}.

\begin{prop} \label{prop: multmartingale}
Let $M=(M_t)_{t \geq 0}$ be an on/off BBM as defined in Definition \ref{def:VariantI} and $(\mathcal{F}_t)_{t \ge 0}$ its canonical filtration. 
Suppose $f,g\in\mathcal{C}^2(\R,[0,1])$.
Then $(f, g)$ is a solution to Equation \eqref{eq:TravellingWaveI} iff the stochastic process $(Y^\lambda)_{t\ge 0}$ defined by
\begin{equation*}
Y^\lambda_t:=\prod_{\alpha \in I_t}  f(M_t^\alpha+\lambda t)\prod_{\beta \in J_t}  g(M_t^\beta+\lambda t), \qquad t\ge 0, 
\end{equation*}
is a martingale wrt to $(\mathcal{F}_t)_{t \ge 0}$.
\end{prop}

\begin{proof}
Applying It\^{o}'s formula to the process $(M_t)_{t\ge0}$ between successive jump (i.e.\ switching or branching) times, and compensating the jumps, we see that
\begin{align*}
Y^\lambda_t&=Y^\lambda_0+\text{loc. mart.}\\
&+\int_0^t\Bigg[ \sum_{\alpha \in I_r } \prod_{\gamma \in I_r\setminus \lbrace\alpha  \rbrace}  f(M_r^\gamma +\lambda r ) \prod_{\beta \in J_r}  g(M_r^\beta +\lambda r )\bigg( \frac{1}{2}  f''(M_r^\alpha +\lambda r )+\lambda f'(M_r^\alpha +\lambda r) \\
    &\qquad\qquad +\kappa\sum_{k=1}^\infty p_k \left( f(M_r^\alpha +\lambda r)^{k+1}- f(M_r^\alpha +\lambda r)\right) +c\left( g(M_r^\alpha+\lambda r)- f(M_r^\alpha +\lambda r)\right)\bigg )\\
    &+ \sum_{\beta \in J_r} \prod_{\alpha \in I_r }  f(M_r^\alpha+\lambda r) \prod_{\gamma \in J_r\setminus \lbrace \beta \rbrace}  g(M_r^\gamma +\lambda r) \left( \lambda g'(M_r^\beta +\lambda r)+c' \left(f(M_r^\beta+\lambda r)- g(M_r^\beta+\lambda r)\right)\right)\Bigg] \ddd r .
\end{align*}
Thus if $(f, g)$ solves Equation \eqref{eq:TravellingWaveI}, the last term on the RHS equals zero, hence $(Y^\lambda)_{t\ge 0}$ itself is a local martingale with values in $[0,1]$ and thus a true martingale.

Conversely, if $(Y^\lambda)_{t\ge 0}$ is a martingale, we have
\[f(x)=\E_{(x,\boldsymbol{a})}\left[Y^\lambda_0\right]=\E_{(x,\boldsymbol{a})}\left[Y^\lambda_t\right]\qquad\text{and}\qquad g(x)=\E_{(x,\boldsymbol{d})}\left[Y^\lambda_0\right]=\E_{(x,\boldsymbol{d})}\left[Y^\lambda_t\right]\]
for all $t\ge0$ and $x\in\R$. On the other hand, we know by Proposition \ref{prop:duality} that the unique solution to Equation \eqref{eq:FKPPDormancy} with initial condition $(f,g)$ is given by
\begin{align*}
    u(t,x) &=\E_{(x,\boldsymbol{a})}\left[\prod _{\alpha \in I_t}f(M^\alpha_t)\prod _{\beta \in J_t}g(M^\beta_t)\right], 
	& v(t,x) =\E_{(x,\boldsymbol{d})}\left[\prod _{\alpha \in I_t}f(M^\alpha_t)\prod _{\beta \in J_t}g(M^\beta_t)\right].
\end{align*}
This gives
\[u(t,x)=f(x-\lambda t),\qquad v(t,x)=g(x-\lambda t),\]
i.e.\ $(u,v)$ is a travelling wave solution to Equation \eqref{eq:FKPPDormancy} and $(f,g)$ is a solution to the travelling wave equation \eqref{eq:TravellingWaveI}.
\end{proof}

\subsection{The supercritical case $\lambda > \lambda^*$}
In this section, we establish the existence and properties of solutions to the travelling wave equation \eqref{eq:TravellingWaveI} in the `supercritical case' $\lambda > \lambda^*$, thereby providing in particular a proof for Theorem \ref{thm:existence_of_supercritical_waves} (for model variant I). 
In order to do so, we will use Proposition \ref{prop: multmartingale} and employ the limit $X^{\lambda_\mu}$ of the additive martingale, 
which we recall is almost surely positive for $\mu^*<\mu< 0$,  
to construct a suitable multiplicative martingale.
Let
\begin{align}\label{def-wave-function}
    f_\mu(x) :=\E_{(x,\boldsymbol{a})}\left[e^{-X^{\lambda_\mu}}\right] \quad \mbox{ and } \quad 
     g_\mu(x) :=\E_{(x,\boldsymbol{d})}\left[e^{-X^{\lambda_\mu}}\right],\qquad x\in\R.
\end{align}
Again using that $X^{\lambda_\mu}$ under $\PP_{(x,\sigma)}$ has the same distribution as $X^{\lambda_\mu}e^{\mu x}$ under $\PP_{(0,\sigma)}$ for $\sigma\in\{\boldsymbol{a},\boldsymbol{d}\}$, we can rewrite this definition as
 \begin{align}\label{def-wave-function-2}
     f_\mu(x)= \E_{(0,\boldsymbol{a})}\left[\exp(-X^{\lambda_\mu} e^{\mu x})\right] \quad \mbox{ and } \quad 
     g_\mu(x)= \E_{(0,\boldsymbol{d})}\left[\exp(-X^{\lambda_\mu} e^{\mu x})\right],
 \end{align}
an observation we will use repeatedly. In particular, this shows that $f_\mu,g_\mu\in\mathcal{C}^2(\R,[0,1])$. 

\begin{proof}[Proof of  Theorem \ref{thm:existence_of_supercritical_waves}]
We are guided by 
 \cite[Section 3.2]{H99}. 
 Let $\mu^*< \mu< 0$. For $(f_\mu, g_\mu)$ as defined above, 
we consider
$$
Y^{\lambda_\mu}_t:=\prod_{\alpha \in I_t}  f_\mu(M_t^\alpha+\lambda_\mu t)\prod_{\beta \in J_t}  g_\mu(M_t^\beta+\lambda_\mu t),\qquad t\ge 0. 
$$ 
We will show below that we have the representation 
\begin{equation}\label{eq:representation-martingale}
Y_t^{\lambda_\mu}=\E\left[e^{-X^{\lambda_\mu}}\Big\vert \mathcal{F}_t\right]\quad\text{a.s.}
\end{equation}
for all $t \geq 0$. 
In particular, 
this shows that is a $(Y^{\lambda_\mu}_t)_{t\ge0}$ 
martingale. Thus by Proposition \ref{prop: multmartingale}, $(f_\mu,g_\mu)$ solves the travelling wave equation \eqref{eq:TravellingWaveI} with speed $\lambda=\lambda_\mu$.
Since $X^{\lambda_\mu}>0$ almost surely, Equation \eqref{def-wave-function-2}
immediately yields that both $f_\mu$ and $g_\mu$ are increasing from $0$ to $1$.

In order to establish \eqref{eq:representation-martingale}, first note that by the branching property we have
 the following representation for the limiting random variable $X^{\lambda_\mu}$, see also \eqref{eq:branchingprop2}: For each $t \ge 0$,
 \begin{align} \label{eq:branchingproplimit}
     X^{\lambda_\mu}=\sum_{\alpha \in I_t}Z^\alpha(t) \exp(\mu (M_t^\alpha+\lambda_\mu t))+\sum_{\beta \in J_t}Z^\beta(t) \exp(\mu (M_t^\beta+\lambda_\mu t)),
 \end{align}
 where the random variables $Z^\alpha(t)$ resp.\ $Z^\beta(t)$ are, conditionally on $\mathcal{F}_t$, independent copies of $X^{\lambda_\mu}$, each started with a single active resp.\ dormant particle at $0$, which are also independent of $\mathcal{F}_t$. 
 Thus we have 

 \begin{align*}
 \E\left[e^{-X^{\lambda_\mu}} \Big\vert \mathcal{F}_t\right]
   &=  \E\left[\prod_{\alpha \in I_t}e^{-Z^\alpha(t) \exp(\mu (M_t^\alpha+\lambda_\mu t))}\prod_{\beta \in J_t}e^{-Z^\beta(t)\exp(\mu(M_t^\beta+\lambda_\mu t))}\Big\vert \mathcal{F}_t\right]\\
      &= \prod_{\alpha \in I_t} \E_{(0, \boldsymbol{a})}\left[e^{-Z^{\alpha}(t) \exp(\mu (M_t^\alpha+\lambda_\mu t))}\Big\vert \mathcal{F}_t\right] \prod_{\beta \in J_t} \E_{(0, \boldsymbol{d})}\left[e^{-Z^{\beta}(t)\exp(\mu(M_t^\beta+\lambda_\mu t))}\Big\vert \mathcal{F}_t\right]\\
     &     = \prod_{\alpha \in I_t} \E_{(0, \boldsymbol{a})}\left[e^{-X^{\lambda_\mu} \exp(\mu x)}\right]\bigg\vert_{x=M_t^\alpha+\lambda_\mu t} \prod_{\beta \in J_t} \E_{(0, \boldsymbol{d})}\left[e^{-X^{\lambda_\mu}\exp(\mu x)}\right]\bigg\vert_{x=M_t^\beta+\lambda_\mu t} \\
     &=     Y_t^{\lambda_\mu},
 \end{align*}
where we also used \eqref{def-wave-function-2} for the last equality.
\end{proof}

We can also use the representation \eqref{def-wave-function} of the travelling waves 
 to establish the asymptotic behaviour of the waves at infinity.
In the following, we will repeatedly use the elementary estimate
\begin{equation}\label{eq:elementary-estimate}
\frac{z}{1+z}\le 1-e^{-z}\le z,\qquad z\ge0.
\end{equation}
\begin{proof}[Proof of Theorem \ref{prop:supercritical_travelling_wave_asymptotics}]
Let $\mu \in\, ]\mu^*, 0[$.
Then on the one hand, 
by \eqref{def-wave-function} and \eqref{eq:elementary-estimate} we clearly have
\begin{align*}
1- f_\mu(x)
=\E_{(x, \boldsymbol{a})}\left[1 - e^{-X^{\lambda_\mu} } \right]
\leq \E_{(x, \boldsymbol{a})}\left[ X^{\lambda_\mu}\right] = {\tt f}_\mu (x),\qquad x\in\R.
\end{align*}
On the other hand, we also obtain by \eqref{eq:elementary-estimate} and since ${\tt f}_\mu (x)={\tt f}_\mu (0)e^{\mu x}$ that
\begin{align*}
1- f_\mu(x) = \E_{(0, \boldsymbol{a})}\left[1 - e^{-X^{\lambda_\mu} e^{\mu x}} \right]
\geq \E_{(0, \boldsymbol{a})}\left[\frac{ X^{\lambda_\mu}e^{\mu x}}{1+ X^{\lambda_\mu}e^{\mu x}}\right]
=\E_{(0, \boldsymbol{a})}\left[\frac{ X^{\lambda_\mu}}{1+ X^{\lambda_\mu}e^{\mu x}}\right]\frac{{\tt f}_\mu (x)}{{\tt f}_\mu (0)},\qquad x\in\R.
\end{align*}
By monotone convergence, we have
\begin{align*}
\lim_{x \to \infty} \E_{(0, \boldsymbol{a})}\left[\frac{ X^{\lambda_\mu}}{1+ X^{\lambda_\mu}e^{\mu x}}\right] = \E_{(0, \boldsymbol{a})}[X^{\lambda_\mu}]={\tt f}_\mu(0),
\end{align*}
as desired. An analogous argument yields the second part of the statement.
\end{proof}

\noindent Next, we prove Theorem \ref{thm:supercritical_travelling_wave_asymptotics} (for model variant I) and show that for initial conditions whose decay behaviour agrees with the asymptotics of a super-critical travelling wave,  
the corresponding solutions to Equation \eqref{eq:FKPPDormancy} converge towards this travelling wave. 

\begin{proof}[Proof of Theorem \ref{thm:supercritical_travelling_wave_asymptotics}]

We are guided by \cite[Theorem 1.41]{C97}. Let $\mu \in\, ]\mu^*, 0[$. We first claim that by our assumptions we can find for $\varepsilon>0$ small and $x$ large enough the bounds \
 \begin{align}\label{eq-bound}
     \exp(-(1+\varepsilon ) {\tt f}_\mu (x))\leq u_0(x) \leq \exp(-(1-\varepsilon) {\tt f}_\mu(x)).
 \end{align}
Indeed, fix  $0<\varepsilon<1$. For the upper bound, pick $x$ large enough such that
 \begin{align*}
 	\abs{1-\frac{1-u_0(x)}{{\tt f}_\mu(x) }}&<\varepsilon ,
 \end{align*}
 which using \eqref{eq:elementary-estimate} implies
 \begin{align*}
  	u_0(x)&<1-(1-\varepsilon){\tt f}_\mu(x) \leq\exp(-(1-\varepsilon){\tt f}_\mu(x)).
 \end{align*}
 For the lower bound, 
 pick $x$ large enough such that
 \begin{align*}
 	\abs{1-\frac{1-u_0(x)}{{\tt f}_\mu(x) }}&<\varepsilon /4
 	 \end{align*}
 and also 
 \begin{align*}
 (1+\varepsilon){\tt f}_\mu(x)&<\varepsilon /2.
 \end{align*}
 Now, again using \eqref{eq:elementary-estimate}, by taking $\varepsilon$ small enough (and by increasing $x$ correspondingly so that the above bounds still hold true), we can ensure that 
 \begin{align*}
  1-\exp(-(1+\varepsilon){\tt f}_\mu(x)) &\geq \frac{(1+\varepsilon){\tt f}_\mu(x)}{1+(1+\varepsilon){\tt f}_\mu(x)} \geq \frac{1+\varepsilon}{1+\varepsilon /2} {\tt f}_\mu(x)\geq (1+\varepsilon/4){\tt f}_\mu(x) >1-u_0(x).
 \end{align*}
Rearranging then yields the lower bound in \eqref{eq-bound}. 
 Analogously, we have bounds
  \begin{align*}
     \exp(-(1+\varepsilon ) {\tt g}_\mu (x))\leq v_0(x) \leq \exp(-(1-\varepsilon) {\tt g}_\mu(x))
 \end{align*}
for $\varepsilon>0$ small and $x$ large enough. 

Now, again denoting by $L_t$ the position of the leftmost particle of the on/off BBM at time $t$, we know from \eqref{eq:Rt} that $\frac{L_t}{t}+\lambda_\mu\to\lambda_\mu-\lambda^*>0$ and thus
$$L_t+\lambda_\mu t \to \infty\qquad\text{a.s.}$$ 
as $t\to \infty$.
Hence, recalling the definition \eqref{eq:additive-martingale} of the additive martingale, by the above bounds
we obtain that
for all $\varepsilon>0$ small enough, almost surely for large enough $t$ 
 \begin{align}\label{eq:estimate-product}
     \exp\left(-(1+\varepsilon)X^{\lambda_\mu}_t \right)\leq \prod_{\alpha \in I_t} u_0(M_t^\alpha+\lambda_\mu t) \prod_{\beta \in J_t } v_0(M_t^\beta+\lambda_\mu t) \leq \exp\left(-(1-\varepsilon ) X^{\lambda_\mu}_t\right).
 \end{align}
 Taking limits and applying expectations (when starting from an active particle at $x \in \R$) yields 
 \begin{align*}
     \E_{(x,\boldsymbol{a})}\left[\exp(-(1+\varepsilon ) X^{\lambda_\mu} )\right] &\le
     \E_{(x,\boldsymbol{a})}\left[ \liminf_{t\to \infty} \prod_{\alpha \in I_t} u_0(M_t^\alpha+\lambda_\mu t) \prod_{\beta \in J_t } v_0(M_t^\beta+\lambda_\mu t) \right]\\
     &\leq \liminf_{t \to \infty}\E_{(x,\boldsymbol{a})}\left[\prod_{\alpha \in I_t} u_0(M_t^\alpha+\lambda_\mu t) \prod_{\beta \in J_t } v_0(M_t^\beta+\lambda_\mu t)\right]\\
     &= \liminf_{t \to \infty} u(t, x+\lambda_\mu t)\\
     &\leq \limsup_{t \to \infty} u(t, x+\lambda_\mu t)\\
     &= \limsup_{t \to \infty}\E_{(x,\boldsymbol{a})}\left[\prod_{\alpha \in I_t} u_0(M_t^\alpha+\lambda_\mu t) \prod_{\beta \in J_t } v_0(M_t^\beta+\lambda_\mu t)\right]\\
     &\leq \E_{(x,\boldsymbol{a})}\left[ \limsup_{t\to \infty} \prod_{\alpha \in I_t} u_0(M_t^\alpha+\lambda_\mu t) \prod_{\beta \in J_t } v_0(M_t^\beta+\lambda_\mu t) \right]\\
     &\le \E_{(x,\boldsymbol{a})}\left[\exp(-(1-\varepsilon ) X^{\lambda_\mu} )\right],
 \end{align*}
 where we used \eqref{eq:estimate-product}, Fatou's lemma and the duality 
 from Proposition \ref{prop:duality}.
 Since $\varepsilon >0$ was arbitrarily small, we get 
 \begin{align*}
     u(t,x+\lambda_\mu t) \to  f_\mu (x)
 \end{align*}
 as $t \to \infty$, and by a similar calculation starting the dual process from a dormant particle at $x \in \R$ we also get
  \begin{align*}
     v(t,x+\lambda_\mu t) \to  g_\mu (x),
 \end{align*}
 as desired.
\end{proof}

\subsection{Absence of travelling waves in the subcritical case $\lambda <\lambda^*$}
Finally, we prove Theorem \ref{thm:no_subcritical_monotone_waves} and show that there are no monotone travelling waves of speed strictly below $\lambda^*$.

\begin{proof}[Proof of Theorem \ref{thm:no_subcritical_monotone_waves}]
The proof follows along the lines of \cite[Section 3.4]{H99}. Assume that there exists a 
solution $(f^\lambda,g^ \lambda)$, increasing from $0$ to $1$, to 
 the travelling wave equation \eqref{eq:TravellingWaveI} with speed $\lambda < \lambda^*$. Then by Proposition \ref{prop: multmartingale}, the process
 \begin{align*}
    Y^\lambda_t = \prod_{\alpha \in I_t} f^\lambda(M_t^\alpha +\lambda t)  \prod_{\beta \in J_t} g^\lambda(M_t^\beta +\lambda t ),\qquad t \ge 0 
\end{align*}
is a martingale bounded in $[0,1]$. 
By the martingale convergence theorem, 
there exists some $[0,1]$-valued random variable $Y^\lambda$ such that 
\begin{align*}
    Y^\lambda_t \to Y^\lambda
\end{align*}
almost surely and in $L^1$ as $t \to \infty$. Hence, on the one hand we have 
\begin{align}
    f^\lambda(x)=\E_{(x,\boldsymbol{a})}\left[Y^\lambda_0\right]=\E_{(x,\boldsymbol{a})}\left[ Y^\lambda \right] \quad \mbox{ and } \quad 
    g^\lambda(x)=\E_{(x,\boldsymbol{d})}\left[Y^\lambda_0\right]=\E_{(x,\boldsymbol{d})}\left[ Y^\lambda \right]. \label{eq: representationoftravellingwave}
\end{align}
On the other hand, since $f^\lambda,g^\lambda$ take values in $[0,1]$,
\begin{align*}
    0\leq Y^\lambda_t =  \prod_{\alpha \in I_t} f^\lambda(M_t^\alpha +\lambda t)  \prod_{\beta \in J_t} g^\lambda(M_t^\beta +\lambda t ) \leq f^\lambda(L_t+\lambda t) \ind_I(t)+ g^\lambda (L_t +\lambda t)\ind_J(t),
\end{align*}
where again $L_t$ denotes the position of the leftmost particle at time $t$ and we denote by $I$ (resp. $J$) the random 
set of time points consisting of the intervals 
during which this leftmost particle is active (resp. dormant).
By \eqref{eq:Rt} we have $\frac{L_t}{t}+\lambda\to \lambda-\lambda^*<0$ almost surely and thus
$$L_t+\lambda t \to -\infty\qquad\text{a.s.}$$
From the fact that $(f^\lambda,g^\lambda)$ are travelling waves increasing from 0 to 1, we deduce that almost surely,
$$
f^\lambda(L_t+\lambda t) \ind_I(t) + g^\lambda(L_t +\lambda t)\ind_J(t) \to 0
$$
as $t \to \infty$. Hence, 
$Y^\lambda=0$
almost surely. But by Equation \eqref{eq: representationoftravellingwave} this implies  
$$
f^\lambda\equiv g^\lambda\equiv 0,
$$
which contradicts the properties of $(f^\lambda,g^\lambda)$. 
\end{proof}

\section{Discussion and outlook}\label{sec:discussion}

In what follows we aim at analyzing the impact of the introduction of dormancy on the wave speed, in both variants of the F-KPP equation with dormancy. We thus set $c=\tilde c, c'=\tilde c', {\tt s}=\tilde{\tt s}$ for the remainder of this section.

\subsection{Comparing the models}

For the natural parameter choices $c=c'={\tt s}=1$, Proposition \ref{prop:relation} shows that 
the critical wave speed of the classical F-KPP model dominates the one of model variant I (``seed bank model''), which in turns dominates the critical wave speed of model variant II (``spore model''). Interestingly, this monotonicity holds also with regard to the entire graphs of the respective speed functions. However, Figure \ref{fig:speed_functions_zoomed} shows that  a similar monotonicity does not hold for the {\em position} of the minima.  Indeed, while the position of the minimum of the speed function for the classical F-KPP equation (remarkably) coincides with the position of the minimum for variant II, this is not the case for variant I.

\begin{figure}[htbp]
    \centering
\begin{tikzpicture}
\begin{axis}[legend pos=outer north east, xlabel=$\mu$, ylabel=$\lambda_\mu$]

\addplot[blue,domain=-1.8:-1, samples=201] {-(-2 + x^2 + sqrt(20 + 4*x^2 + x^4))/(4*x)};
\draw[fill] (axis cs:{-1.19103,0.982416}) circle [radius=1.5pt] node[above left] {$ $};
\draw[dashed] (axis cs:{-2,0.982416}) -- (axis cs:{-0.9,0.982416});
\addlegendentry{seed bank model}

\addplot[green,domain=-1.8:-1, samples=201] {-(-2 + x^2 + sqrt(20 - 4*x^2 + x^4))/(4*x)};
\draw[fill] (axis cs:{-1.41421,1/sqrt(2)}) circle [radius=1.5pt] node[above left] {$ $};
\draw[dashed] (axis cs:{-2,1/sqrt(2)}) -- (axis cs:{-0.9,1/sqrt(2)});
\addlegendentry{spore model}

\addplot[red,domain=-1.8:-1, samples=201] {-(0.5*x + 1/x)};
\draw[fill] (axis cs:{-1.41421,sqrt(2)}) circle [radius=1.5pt] node[above left] {$ $};
\draw[dashed] (axis cs:{-2,sqrt(2)}) -- (axis cs:{-0.9,sqrt(2)});
\addlegendentry{classic F-KPP}

\draw[dashed] (axis cs:{-1.41421,2}) -- (axis cs:{-1.41421,0.5});
\draw[dashed] (axis cs:{-1.19103,2}) -- (axis cs:{-1.19103,0.5});

\end{axis}
\end{tikzpicture}
\caption{Position and value of the minima of the speed functions for all three models.}
\label{fig:speed_functions_zoomed}
\end{figure}
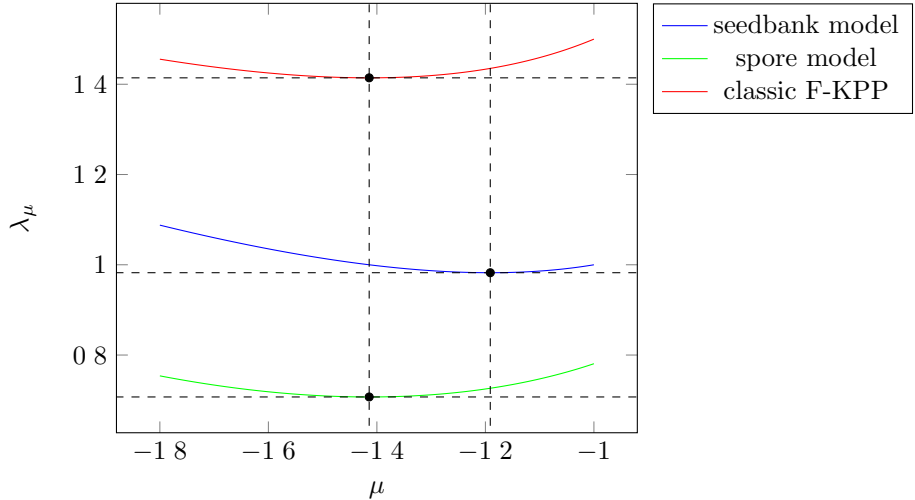

For the above unit parameters, the reduction of the speed of advance of an advantageous gene due to dormancy is more severe in the spore model  than in the seed bank model. On an intuitive level, this can be understood from their corresponding dual processes through Theorem \ref{thm:rightmost}. In on/off BBM of model variant I, individuals can both move and reproduce while being active. In particular, newborn particles can contribute to the spread of the population immediately after birth and are only slowed down due to switches into dormancy later on. In contrast, in the on/off BBM of model variant II, newly created actively reproducing individuals are initially non-moving and need to switch their type from $ \boldsymbol{a}$ to $ \boldsymbol{d}$ before they can be subject to dispersal, in turn preventing them from branching. 

However, the picture of the impact of dormancy over the whole parameter space is relatively complex, as we will see below.
Indeed, although the values of the critical wave speeds $\lambda^*$ and $\tilde{\lambda}^*$ of model variants I and II can be explicitly characterized, they are not just simple functions of the parameters $c, c'$ and ${\tt s}$, and in practice will often have to be computed numerically. To get a feeling for the effects of the different parameters on the critical wave speed in the various models, we highlight the following concrete scenarios.

\paragraph{Fixed transition rates $c, c'=1$, selection varying from $0$ to $\infty$:} This situation is depicted in the upper left panel in Figure \ref{fig:4}. The critical wave speeds in all three models grow as (apparently) concave functions from $0$ to $\infty$. However, the relative impact of dormancy in model I seems to become smaller and smaller in comparison to the classical F-KPP model. Interestingly, this is not the case for variant II, where the critical wave speed is always precisely half the speed of the classical model. In fact, we have that $$\lambda^{\ast, \tiny \rm classical} ({\tt s})= \sqrt{2{\tt s}}= 2  \tilde{\lambda}^*({\tt s})$$
for all ${\tt s} \in\ ]0, \infty[$. 
It would be interesting to find a quantitative intuitive argument for this precise relationship. One hint in this direction seems to be that if $c=c'$, then the asymptotic fraction of time that an individual spends in the active resp. dormant state is precisely $1/2$ (but this is of course also true for model I, which shows different behaviour).

\paragraph{Fixed selection rate ${\tt s}=1$, transition rates $c=c'$ simultaneously varying from 0 to $\infty$:}
This situation is depicted in the lower right panel in Figure \ref{fig:4}. 
The fixed relationship $c=c'$ ensures that the active and the dormant pool of individuals are of the same relative size. Again, the classical critical wave speed is precisely twice as large as the one of variant II, and they agree with the case ${\tt s}=1=c=c'$. This means in particular that the critical wave speed for the spore model is independent of the overall transitioning rate $c=c'$ in this case. But the dependence of model I on $c=c'$ is non-trivial: For small overall transition rates, transitions from the active population into the seed bank are rare. For the dual on/off BBM process of variant I this means that newly born particles will spend a long time in an actively moving and reproducing state. It is thus intuitively clear that the speed of the rightmost particle approaches the classical case as $c=c' \to 0$. In contrast, for very large switching rates, particles will almost immediately enter (and leave) the seed bank, with on average about half of the particles (including the newly created ones) being in a dormant state at any given time. These rapid fluctuations between active and dormant state slow the action of the Laplacian and the selection term by a factor of 1/2, explaining the reduced critical wave speed, which converges to the one of variant II.

\paragraph{Fixed ${\tt s}=c'=1$ while $c$  varies from $0$ to $\infty$:}
This situation is depicted in the upper right panel in Figure \ref{fig:4}, where the critical wave speed is represented as a function of $c$. Small values of $c$ correspond to a `small seed bank', that is, particles spend most of their time in an active state. It is thus not surprising that the behaviour of variant I is close to the classical case for small $c$, in contrast to variant II, where the dual process needs particles to be in the dormant state to engage in spatial motion.

Surprisingly, the critical wave speed as a function of $c$ seems to be uni-modal in variant II, hinting at a trade-off between selection and switching effects. Again this seems to be due to the mutually exclusive branching vs. motion character of the dual of variant II. More precisely, on the one hand diffusion is needed to colonize new areas and initiate travelling waves, but this only takes place while in the dormant non-reproducing state. On the other hand, the branching (selection) term in the active component contributes most to the linear speed of the rightmost particle, but it can only be effective if particles switch quickly into dormancy to find their way into new environments. It is thus reasonable to expect an equilibrium of sorts for these two effects that leads to a maximal wave speed. Our simulations suggest that this equilibrium is attained at $c=c'$, i.e.\ whenever switching to and from the dormant state happens at the same rate, which is again remarkable.

Finally, as $c \to \infty$, the critical wave speeds of both variants I and II approach 0. In this case, the effect of the dormant component dominates, and hence the effect of the selection becomes smaller and smaller.

\paragraph{Fixed ${\tt s}=c=1$ while $c'$  varies from $0$ to $\infty$:}
This situation is depicted in the lower left panel in Figure \ref{fig:4}. Note that large values of $c'$ mean that dormant particles in the dual processes `wake up quickly'.
For variant I the critical wave speed decreases as $c'$ decreases, since in the dual process, dormant particles cannot produce offspring. However, it is an interesting question whether 0 can actually be reached, and in fact a phase-transition seems to emerge: If $c'$ is close to 0, switching into dormancy (at rate $c$) amounts to an effective `killing', and branching can only happen during the initial active phase. The expected time in this initial phase is $1/c$, so that iff 
$$
{\tt s} \cdot \frac 1c > 1,
$$
the branching process will be (effectively) `super-critical', and the limit of the critical wave speed should stay above 0 as $c' \to 0$, whereas in the opposite case, one should see convergence to 0. This is consistent with the simulations underlying Figure \ref{fig:5}, for the values $c=1$ and  ${\tt s}=3/2$ (super-critical) and ${\tt s}=1/2$ (sub-critical). This is another example where arguments via the probabilistic dual process shed light on the behaviour of the original analytic system.

The critical wave speed of variant II again exhibits a uni-modal shape as the result of a trade-off that can be understood from the dual process with similar arguments as in the previous scenario.

\begin{figure}[htbp]
\label{fig:4}
\begin{center}
	\includegraphics[width=7.5cm]{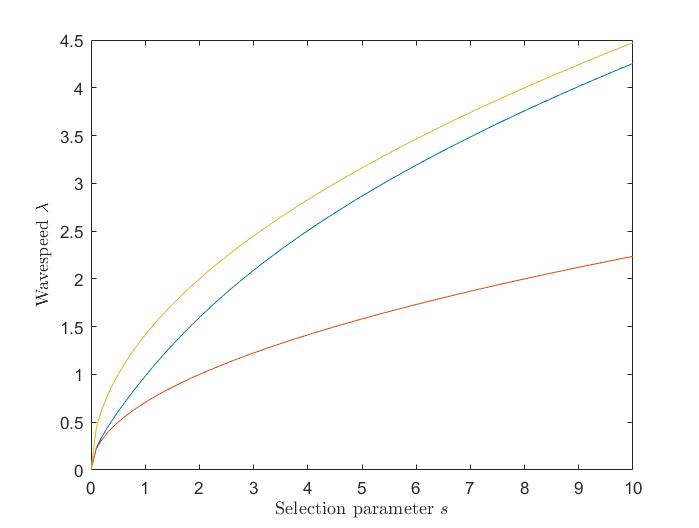}
	\includegraphics[width=7.5cm]{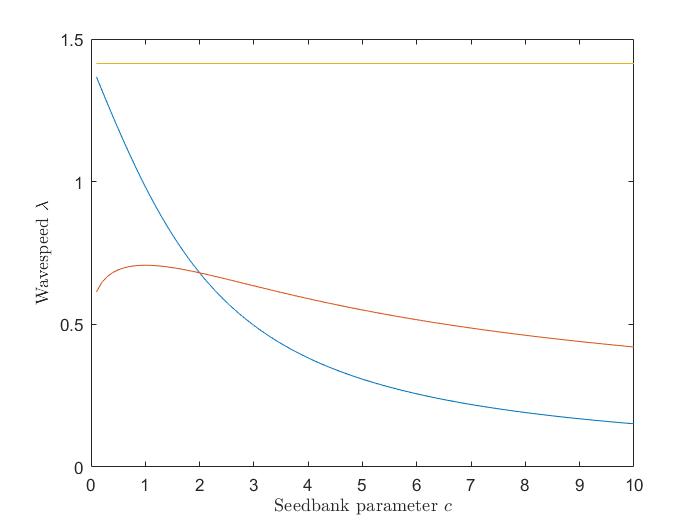}\\
	\includegraphics[width=7.5cm]{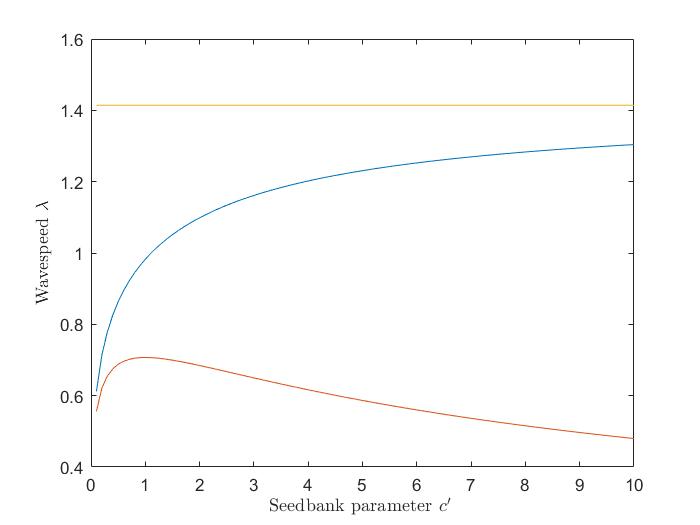}
	\includegraphics[width=7.5cm]{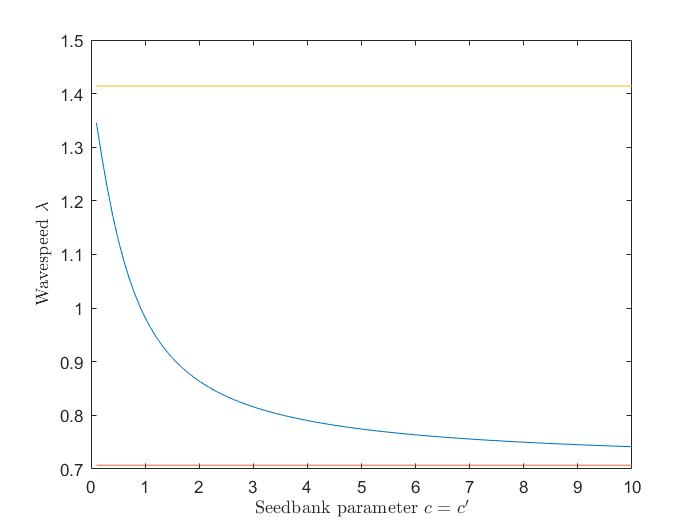}
		\caption{Critical wavespeed in classical FKPP (yellow), with seed bank (blue) and in the spore model (brown) with varying parameter ${\tt s},c,c',c=c'$ and all other parameters set to $1$.}
		\end{center}
\end{figure}

\begin{figure}[htbp]
\label{fig:5}
\begin{center}
	\includegraphics[width=7.5cm]{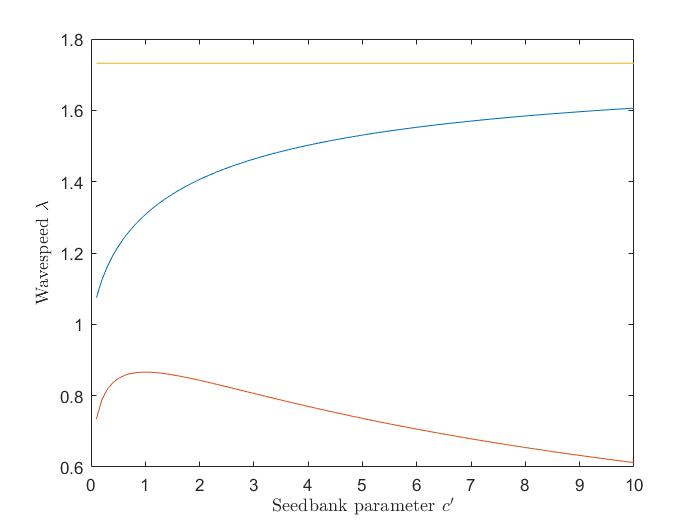}
	\includegraphics[width=7.5cm]{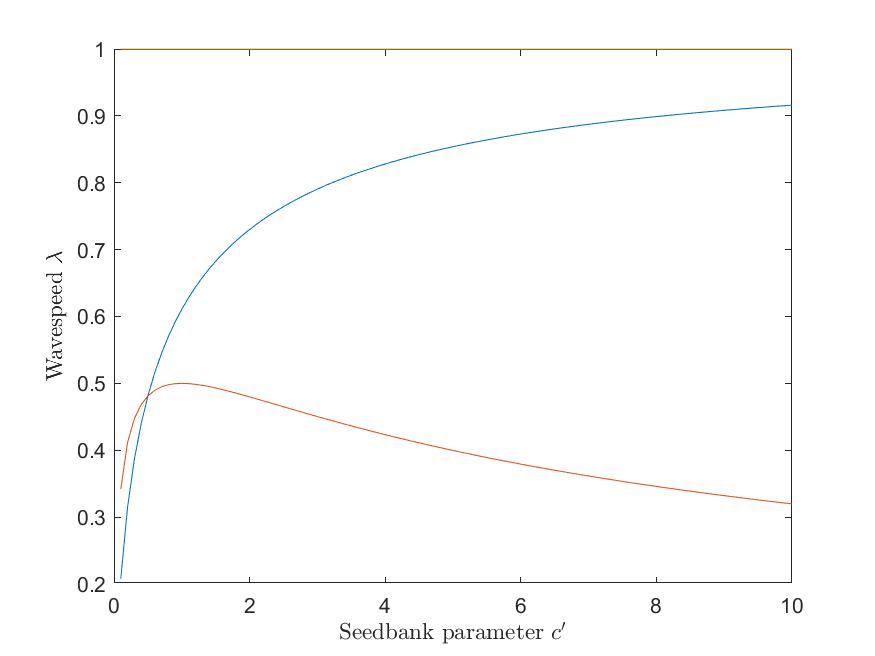}
		\caption{Critical wavespeed in classical FKPP (yellow), with seed bank (blue) and in the spore model (brown) with varying parameter $c'$ and ${\tt s}=3/2, c=1$ in the first and ${\tt s}=1/2, c=1$ in the second figure.}
		\end{center}
\end{figure}

\subsection{Related models}

Reaction-diffusion models similar to variant I also appear in theoretical neuroscience in the form of nerve-axon equations, see e.g.\  \cite{Evans1972_1}.
There, a rather general multi-component reaction diffusion system is investigated, where, similar to our systems, a spatial diffusion term is only present  in a single component. 
However, the author is specifically interested in travelling (nerve-)pulses rather than travelling waves, meaning that the `wave-shape $w$' is supposed to go to zero on both ends (i.e. $w(x) \to 0$ for $x \to +\infty$ \emph{and} $x \to -\infty$). The very general assumptions on the coupling terms in these papers have provided a framework for a rather large body of literature of related works, and many further variants of coupling terms have been discussed. The corresponding papers are typically focused on analytic and numerical aspects and indeed often do not exhibit duality relations to stochastic processes such as branching Brownian motion. Formulas for the speed of travelling waves of certain variants of the nerve-axon equations can for example be found in \cite{HP2000} (see p.\ 4) on contaminant transport.

The related paper \cite{XZ2005} considers a model for a cholera epidemic, where the dispersing individuals are interpreted as infectious bacteria and the non-dispersing individuals are considered as humans, acting as `incubators' for the reproduction of the bacteria  whose mobility is negligible. This model essentially looks the same as the spore model, however the coupling term in the second line of the spore model does not fulfill the required assumptions in \cite{XZ2005} on its second derivative\footnote{The corresponding formulation of the coupling term for the spore model (in the notation found in \cite{XZ2005}) would be $g(u) = u(1-u) + u = 2u -u^2$. Now \cite{XZ2005} considers the bistable case, which in particular requires the second derivative of the coupling term to take on positive and negative values, where the curvature of the coupling term for the spore model is strictly negative, $g''(u) = -2 < 0$.}. 

In \cite{ZWS2010} the authors use so-called speed index functions to investigate the wave speed. These are essentially Laplace-type transforms of an integral kernel. The wave speed in \cite{ZWS2010} then appears as a solution to an equation involving the speed index function and the model parameters. This is somehow reminiscent of the speed functions considered in the present paper. According to figures 5-7 and 11-17 of \cite{ZWS2010}, the dependence on the model parameters of the wave speed is monotone in contrast to the spore model, where we find a non-monotone dependence. 

\subsection{Open questions and future research}

Given the large body of work on the classical F-KPP equation and its variants, our results seem far from complete. In this section we briefly touch upon several open questions including the related technical difficulties, and outline 
some aspects for future research.

\paragraph{Convergence to the critical wave and its speed and shape.}
Note that our convergence theorem involving speed and shape of the wave (Theorem \ref{thm:supercritical_travelling_wave_asymptotics}) only covers the supercritical regime, and that our  results for the critical case (Theorem \ref{thm:critical_speed}) neither provide an asymptotic shape nor any convergence behaviour into the critical travelling wave.

This is stark contrast to the classical F-KPP equation, where one has  convergence of the solution, when started from suitably decaying initial conditions, into the critical travelling wave (see e.g.\ \cite{B83}). More precisely, there exists a process $(m_t)_{t \geq 0}$ such that
\begin{align*}
	u(t,x+m_t) \to w(x)
\end{align*}
as $t \to \infty$, where $u$ is the solution to the classical F-KPP equation started from e.g.\ Heaviside initial conditions and $w$ its travelling wave 
corresponding to the critical speed $\sqrt{2}$. Here, $m_t$ is chosen as the unique value such that 
\begin{align*}
u(t,m_t)=\frac{1}{2}
\end{align*}
for each $t \geq 0$. The asymptotics of $m_t$ have been studied in detail by e.g. McKean, Bramson and Roberts in \cite{McK75, B78, B83, Roberts13}, who obtained
$$
m(t)= \sqrt{2}t- \frac{3}{2\sqrt{2}} \log t + {\rm constant} + o(1).
$$ 
It would be desirable to obtain similar results for our versions of the model wirth dormancy, starting with the question whether one has the existence of a process $(m_t)_{t \geq 0}$, or rather two potentially different processes $(m^1_t)_{t \geq 0}$ and $(m^2_t)_{t \geq 0}$, such that
\begin{align*}
	u(t,x+m^1_t) \to f(x) \quad \mbox{ and } \quad  v(t,x+m^2_t) \to g(x)
\end{align*} 
as $t \to \infty$, alongside with the finer asymptotic results for their speeds. For coupled two-component systems as in our case this seems to have been an open question for the last couple of decades (see e.g. \cite[pp.\ 2,5]{E01}). 

Unfortunately, this also implies that many further results are currently inaccessible for higher order systems like ours, including e.g.\ the probabilistic representation of the travelling wave of Lalley and Sellke and related results regarding the shape of the critical wave such as \cite[Theorem 1]{LS87}.\\

\paragraph{Convergence of the additive martingale and existence of travelling waves in the critical case.}
Related to the above issues is the problem that our results regarding the convergence of additive martingales in Section \ref{sec:add_mart} cover everything except the critical case $\mu=\mu^*$. In the context of the classical F-KPP equation, this can be covered through a spine argument using the Girsanov Theorem and the fact that the quadratic variation of Brownian motion is deterministic (see e.g.\ \cite{Kyp04}). However, in the case of an on/off Brownian motion (without branching) the quadratic variation is truly probabilistic, making an application of the Girsanov Theorem difficult: For instance if $c=c'=1$, the on/off Brownian motion can be expressed by
\begin{equation*}
B_t^{\text{on/off}} = \int_0^t \underbrace{(N_s + 1) \!\!\!\mod 2}_{\in \{0,1\}} \, {\rm d}B_s,
\end{equation*}
\noindent where $N$ is a Poisson process with rate one and $B$ is a standard Brownian motion. Its quadratic variation is then given by the process
\begin{equation*}
[B^{\text{on/off}}]_t = \int_0^t ((N_s + 1) \!\!\!\mod 2)^2 \, {\rm d}s = \int_0^t ((N_s + 1) \!\!\!\mod 2) \, {\rm d}s
\end{equation*}
\noindent which is a random piece-wise linear function.

This also leads to a lack of existence results for travelling waves in the critical case $\lambda=\lambda^*$. An alternative approach through adoption of the stopping line theory by Chauvin \cite{C91} along the lines of \cite{H99} turns out to also be challenging for the multi-type case.

\paragraph{Uniqueness of travelling waves modulo translation.}
For the classical F-KPP equation it is well known that the monotone travelling waves 
from $0$ to $1$ are unique up to translations. A probabilistic approach may be found in \cite{H99} but relies on the convergence of the additive martingale for the critical case $\mu=\mu^*$.\\
We do however believe that the methods outlined in \cite[Theorem 1.41]{C97} in combination with the analytic methods from \cite[Section 3 (f)]{C97} may also yield the corresponding result for variant I of the F-KPP equation with dormancy (i.e.\ the seed bank model). Note however that small modifications will be necessary. For example, the phase plane formulation of the equation will involve the 3-dimensional vector $(u,v,u')$ instead of the 4-dimensional vector $(u,v,u',v')$, and the proof of  \cite[Lemma 3.2]{C97} will only yield exponential decay for the active part $(u-1,(u-1)')$ instead of the whole system $(u-1,v-1,(u-1)',(v-1)')$. These difficulties can be overcome, for example, by considering a delay reformulation of the system as in \cite{BHN19}. Since however we want to focus on probabilistic methods for the analysis of our models in this paper and the aforementioned methods require heavy use of phase plane analysis and differential equation theory, we refrain from providing the technical details here.

\paragraph{Properties of on/off branching Brownian motions.}

Branching Brownian motion has been a classical object of study in probability theory for more than 50 years, apparently beginning with \cite{M62}. In the last decade, it has experienced increased interest due to e.g.\ the construction and analysis of its extremal process (\cite{ABK13}, \cite{ABBZ13}).

It appears to be an interesting task to investigate related extremal properties of our two new variants of this model in the form of on/off branching Brownian motions.
This seems to extend the recent line of research on time-inhomogeneous or variable speed BBM
(\cite{BH15}, \cite{MZ16}) or in variable environments, e.g.\cite{MM19}
into a novel direction.

\section{Appendix} \label{sec:Appendix}

We recall the definition of the eigenvalue problem \eqref{eq:eigenvalue_problem} and of 
$\lambda_\mu=\lambda_\mu^+$ in \eqref{eq:lambda_mu}.

\begin{lemma}
    \label{lem:speedfunctionpositivity}
Let $\mu < 0$. Then we have $\lambda_\mu > 0$,
and the eigenvalue $-\mu\lambda_\mu$ in \eqref{eq:eigenvalue_problem} has a 
strictly positive eigenvector (which is unique up to a positive constant) explicitly given by
\begin{align*}
 \vec{d}(\mu)
 &= \begin{pmatrix}
1 \\ 1 - \frac{-\mu\lambda_\mu}{c'-\mu\lambda_\mu} 
\end{pmatrix}
>\begin{pmatrix}
0\\
0
\end{pmatrix}.
\end{align*}
\end{lemma}
\begin{proof}
The matrix
    \begin{equation*}
        B := \frac{1}{2}\mu^2A + Q + R = \begin{pmatrix}
            \frac{1}{2}\mu^2 - c + {\tt s} & c \\
            c' & -c'
        \end{pmatrix}
    \end{equation*}
    is quasipositive and irreducible, since $c,c'>0$. Thus
by a variant of the Perron-Frobenius-Theorem
(see \cite[Thm.\ 2.6, p.\ 46]{S1981}), it has a special (Perron-Frobenius) eigenvalue which is real and larger than the real part of all other eigenvalues. 
In view of this, the value $-\mu\lambda_\mu$ (being the larger of the two eigenvalues in \eqref{eq:eigenvalue_problem}) must be (real and) the Perron-Frobenius eigenvalue of $B$. 
By \cite[Thm.\ 2.6 (e)]{S1981}, $- \mu\lambda_\mu $ ist strictly positive if and only if for all vectors $\vec{y}\ge\vec{0}$, $\vec{y}\ne\vec{0}$ we have that at least one coordinate of $B\vec{y}$ is strictly positive. 
Now if $\vec{y}=(y_1,y_2)^\top$ with $y_1>y_2\ge0$, then 
$(B\vec{y})_2=c'(y_1-y_2)>0$, while if $y_2>y_1\ge0$ or $y_1=y_2>0$, then $(B\vec{y})_1=(\frac{1}{2}\mu^2+{\tt s})y_1+c(y_2-y_1)>0.$
Therefore we have $-\lambda_\mu \mu> 0$. Since $\mu < 0$, we get $\lambda_\mu > 0$.

The existence of a strictly positive eigenvector which is unique up to positive multiples also follows from the Perron-Frobenius-Seneta Theorem \cite[Thm.\ 2.6 (b)]{S1981}, but it can also be explicitly computed from the eigenvalue problem \eqref{eq:eigenvalue_problem}. 

\end{proof}

\begin{proof}[Proof of Proposition \ref{prop:speed_function}]
We first observe that
\begin{align}\label{eq:asymptotics-speed-function}
    \lim_{\mu \to -\infty} \lambda_{\mu}=\infty=    \lim_{\mu \to 0-} \lambda_{\mu} .
\end{align}
Indeed, the first equality follows directly from Equation \eqref{eq:lambda_mu}.
For the second, it is easily checked that 
\begin{align*}
\lim_{\mu\to0-}-2\mu\lambda_\mu=& {\tt s}-c'-c+\sqrt{c^2+2\,c\,c'-2\,c\,{\tt s}+({c'})^2+2\,c'\,{\tt s}+{\tt s}^2}\\
 & = {\tt s}-c'-c+ \sqrt{ (c-{\tt s})^2 + 2 c c' + (c')^2 + 2c'{\tt s} }\\
 &>0.
\end{align*}
Consequently, the speed function $\mu \mapsto \lambda_\mu$ (being continuous) has a global minimizer $\mu^*\in\,\ ]-\infty, 0[$. 

Now, assume there exists another (local) minimizer of the speed function $\lambda_\bullet$. 
Then due to \eqref{eq:asymptotics-speed-function},
there must exist distinct $\mu_1, \ldots ,  \mu_4 \in\, ]-\infty, 0[$ such that
\begin{align*}
    \lambda^\#\defeq\lambda_{\mu_1}=\lambda_{\mu_2}=\lambda_{\mu_3}=\lambda_{\mu_4}.
\end{align*}
But recall that $-\mu \lambda_\mu$ is an eigenvalue of 
$\frac{1}{2}\mu^2A + Q + R $, 
or equivalently 
$P(\mu,\lambda_\mu)=0$, where
$$P(\mu,\lambda) \defeq\det\left(\frac{1}{2}\mu^2A + Q + R+\mu \lambda I_2 \right). $$
Hence, on the one hand for fixed $\lambda$ the map $\mu \mapsto P(\mu, \lambda)$ is a polynomial of degree $3$, and on the other hand
$$P(\mu_1, \lambda^\#)=P(\mu_2, \lambda^\#)=P(\mu_3, \lambda^\#)=P(\mu_4, \lambda^\#)=0.$$
This is a contradiction. Consequently, $\mu^*$ is the unique local minimizer of the speed function.
\end{proof}

\begin{proof}[Proof of Proposition \ref{prop:relation}]
Note first that the respective speed functions for each model are given by
\begin{align*}
\lambda_\mu = -\frac{\sqrt{\frac{\mu^4}{4}+\mu^2+5}+\frac{\mu^2}{2}-1}{2\mu}, \quad 
\tilde \lambda_{ \mu} = -\frac{\sqrt{\frac{\mu^4}{4}-\mu^2+5}+\frac{\mu^2}{2}-1}{2\mu} \quad \mbox{and} \quad
\lambda^{\text{classical}}_\mu=-\frac{1+\frac{\mu^2}{2}}{\mu}.
\end{align*}
Then it is straightforward to show that 
\begin{align*}
\tilde \lambda_\mu< \lambda_\mu< \lambda^\text{classical}_\mu
\end{align*}
for all $\mu<0$, from which the result follows. 

\end{proof}

\subsection*{Acknowledgments}
The authors wish to thank Jay T. Lennon (Bloomington) for suggesting to analyze models in which dormancy and dispersal covary. 
This work has been supported by DFG IRTG 2544 Berlin-Oxford and by DFG under Germany’s Excellence Strategy – The Berlin Mathematics Research Center MATH+ (EXC-2046/1, project ID 390685689, BMS Stipend).

\newpage

\bibliographystyle{abbrv}
\bibliography{literature}

\end{document}